\documentclass[11pt]{article}

\usepackage[T1]{fontenc}
\usepackage{newtxtext,newtxmath}
\usepackage[final]{microtype}
\usepackage[margin=1in]{geometry}
\usepackage{authblk}

\usepackage[authoryear,round]{natbib}
\usepackage{amsmath,amsthm,mathtools}
\usepackage{booktabs,array,graphicx,siunitx}
\usepackage[font=small,labelfont=bf,labelsep=period]{caption}
\usepackage{algorithm,algpseudocode}
\usepackage{xurl,placeins}
\usepackage{xr}
\usepackage[colorlinks=true,allcolors=blue]{hyperref}
\usepackage[capitalize,nameinlink]{cleveref}
\theoremstyle{plain}
\newtheorem{theorem}{Theorem}
\newtheorem{proposition}[theorem]{Proposition}
\newtheorem{lemma}[theorem]{Lemma}
\newtheorem{corollary}[theorem]{Corollary}
\theoremstyle{definition}

\theoremstyle{remark}

\newcommand{\R}{\mathbb R}
\newcommand{\B}{\mathcal B}

\newcommand{\pos}[1]{[#1]_+}
\newcommand{\keywords}[1]{\par\medskip\noindent\textbf{Keywords:} #1}

\makeatletter
\@ifundefined{ifblind}{\newif\ifblind\blindfalse}{}
\@ifundefined{blindsubmission}{}{\blindtrue}
\@ifundefined{ifcompanion}{\newif\ifcompanion\companionfalse}{}
\@ifundefined{ifincludeappendix}{\newif\ifincludeappendix\includeappendixtrue}{}
\makeatother

\ifdefined\opresubmission
  \usepackage{setspace}
  \ifcompanion\else
    \usepackage[tablesonly,nomarkers,nolists]{endfloat}
    
  \fi
\fi

\ifcompanion
  \title{Electronic Companion to ``Simultaneous Group-Envelope Bounds for $\Gamma$-Robust Multiple-Choice Knapsack Problems''}
\else
  \title{Simultaneous Group-Envelope Bounds for $\Gamma$-Robust Multiple-Choice Knapsack Problems}
\fi
\ifblind
  \author{Anonymous submission}
  \affil{}
  \hypersetup{pdfauthor={Anonymous}}
\else
  \author{Zi Yuan Eric Shao}
  \affil{Department of Mathematics, ETH Z\"urich, \texttt{ershao@student.ethz.ch}}
  \hypersetup{pdfauthor={Zi Yuan Eric Shao}}
\fi
\ifblind
  \date{}
\else
  \date{August 2026}
\fi
\hypersetup{pdftitle={Simultaneous Group-Envelope Bounds for Gamma-Robust Multiple-Choice Knapsack Problems}}

\newcommand{\PubValidationMaxError}{\num{6.58e-08}}
\newcommand{\PubValidationMinSlack}{\num{4.26e-14}}
\newcommand{\PubValidationCertViolation}{\num{3.98e-13}}
\newcommand{\PubValidationCertGap}{\num{9.92e-09}}
\newcommand{\PubKernelCases}{48}
\newcommand{\PubKernelLargeSpeedup}{60.14}
\newcommand{\PubKernelMaxError}{\num{4.27e-07}}
\newcommand{\PubKernelCompressedSlope}{0.96}
\newcommand{\PubKernelDenseSlope}{2.03}
\newcommand{\PubTraceCases}{24}
\newcommand{\PubTraceIntervals}{168}
\newcommand{\PubTraceGeoSpeedup}{2.21}
\newcommand{\PubTraceWins}{24}
\newcommand{\PubTraceDominancePct}{100.0\%}
\newcommand{\PubTraceCertGap}{\num{9.67e-09}}
\newcommand{\PubPrimaryCases}{60}
\newcommand{\PubPrimaryGeoSpeedup}{2.37}
\newcommand{\PubPrimaryCILow}{2.27}
\newcommand{\PubPrimaryCIHigh}{2.47}
\newcommand{\PubPrimaryWins}{60}

\newcommand{\PubPrimaryCertGap}{\num{9.53e-09}}
\newcommand{\PubPrimaryMedianThetaPct}{28.8\%}

\newcommand{\PubRepeatBlocks}{300}
\newcommand{\PubRepeatWins}{300}
\newcommand{\PubRepeatMinSpeedup}{1.44}
\newcommand{\PubCompressedMedianCV}{0.58\%}
\newcommand{\PubCliqueMedianCV}{0.44\%}
\newcommand{\PubCliqueMedianNnz}{\num{100394}}
\newcommand{\PubCliqueMedianCSRMiB}{1.19}

\newcommand{\PubFamilyDense}{4.26}
\newcommand{\PubFamilyBreakpoints}{1.77}

\newcommand{\PubStressGeoSpeedup}{5.36}
\newcommand{\PubStressNHighSpeedup}{6.28}
\newcommand{\PubApplicationCases}{9}
\newcommand{\PubApplicationWins}{0}
\newcommand{\PubApplicationGeoSpeedup}{0.27}
\newcommand{\PubApplicationCILow}{0.24}
\newcommand{\PubApplicationCIHigh}{0.32}
\newcommand{\PubApplicationNHighSpeedup}{0.33}
\newcommand{\PubExternalCases}{9}
\newcommand{\PubExternalWins}{9}
\newcommand{\PubExternalGeoSpeedup}{2.75}

\newcommand{\PubExternalNHighSpeedup}{4.00}
\newcommand{\PubExternalCertGap}{\num{9.85e-09}}

\begin{document}
\maketitle

\ifcompanion
\begin{center}
\emph{Comparator formulation, exact-integration audit, statistical protocol, generators, and reproducibility record.}
\end{center}
\else

\begin{abstract}
Many robust planning problems are solved by checking a family of ordinary optimization problems, one for each uncertainty threshold.  Repeatedly building and solving those relaxations can dominate runtime.  We show that, when a decision chooses exactly one option from each group, the entire threshold family of multiple-choice knapsack relaxations can instead be bounded together.  A cancellation removes threshold-specific baselines and reduces each group's contribution to two simple envelopes.  After sorting, one multiplier is evaluated across all thresholds in time linear in their number and nearly linear in the number of options.  The resulting interval bound is valid, matches the ordinary relaxation on a single feasible threshold, and is at least as strong as the matched group-clique comparator.  A certified one-dimensional search controls multiplier error, while exact checks resolve numerically ambiguous feasibility and comparison cases.  In an otherwise identical adaptive search, the method wins all 60 instance-level median comparisons and achieves a \PubPrimaryGeoSpeedup-fold geometric-mean speedup.  Additional ablations, independent linear-program checks, and nine instances transformed from a published archive test the mechanism beyond the internal generator.  A separate integer audit shows the boundary: faster relaxation bounds need not help when integer search dominates.
\end{abstract}

\keywords{robust optimization; multiple-choice knapsack; parametric Lagrangian bounds}

\noindent\textbf{Area of review:} Optimization.\\
\textbf{Subject classifications:} Programming, integer: robust multiple-choice knapsack; Programming, linear: Lagrangian upper bounds; Analysis of algorithms: parametric evaluation.

\section{Introduction}\label{sec:introduction}

Cardinality-budget uncertainty reduces a robust binary problem to finitely many nominal problems indexed by a scalar protection threshold \citep{BertsimasSim2003,BertsimasSim2004}.  The reduction is polynomial, but a conventional method may still solve one nominal relaxation for every distinct deviation.  Strong bounded-threshold formulations and divide-and-conquer reduce this burden \citep{AlvarezMiranda2013,Hansknecht2018,BusingGersingKoster2023}; the remaining task is to obtain a strong bound over an interval of thresholds without repeatedly materializing a large relaxation.

That task is practically consequential when LP relaxations, rather than the nominal integer subproblems, control runtime.  In the closest large-scale study identified in our review of bounded-threshold robust relaxations, \citet{BusingGersingKoster2023} report that LP solves consume 33.1\% of time on average among solved instances.  In their million-item robust-knapsack experiment, the share reaches 93.17\%, and the root relaxation alone averages 1,062 seconds.  Their model places uncertainty in the objective and therefore differs from the resource-uncertainty model below; the figures are motivation rather than transferable performance estimates.  They nevertheless identify the relevant computational bottleneck directly: solving the bounded-threshold LP family can dominate a robust search.

Many operational decisions have an exactly-one menu structure: one price per product, one configuration per component, or one service level per customer segment.  Every fixed-threshold relaxation of the resulting robust model is a multiple-choice knapsack linear program.  Binary and at-most-one knapsacks are included by adding a dummy option.  Although one fixed relaxation is classical and inexpensive, jointly bounding the full parameterized family is a different computational task.

The primary deliverable here is a faster complete threshold-disjunctive LP certificate.  It can be used as a root certificate or as an interval bound in an outer threshold search for an exactly-one robust model.  This is deliberately narrower than a new end-to-end integer solver: the value arises when interval LP work is material, whereas faster interval bounds cannot remove time spent inside fixed-threshold integer subproblems.

This paper identifies a cancellation created by the exactly-one equations.  After dualizing a fixed-threshold capacity constraint, the threshold-dependent group baselines disappear.  Within each local deviation interval, a group contributes the larger of a constant and a line whose slope is common to all active options.  Prefix and suffix maxima together with range accumulation therefore evaluate one multiplier simultaneously over the full global threshold grid.  The resulting algorithm is linear in the threshold count and nearly linear in the option count after sorting.

The paper makes four contributions.  First, it derives the cancellation and a two-envelope algorithm that evaluates a multiplier over all thresholds at once.  Second, it proves exact-minimax dominance over the closest bounded-threshold group-clique relaxation and supplies a convex bracketing procedure with an explicit implemented optimality gap.  Third, it embeds the certified bound in an adaptive decomposition with global-bound invariants and a separately scoped exact-integer extension.  Fourth, a fixed-design study separates algebra, multiplier certification, kernel scaling, matched-interval behavior, complete-certificate time, robustness, and external-coefficient transfer.  The comparator is deliberately component-matched: the clique relaxation and the envelope bound are placed under the same search, fixed-threshold solver, stopping rule, and timing policy.  The contribution is the simultaneous parametric bound, not a new uncertainty set, a faster algorithm for one knapsack relaxation, a reproduction of a complete DnC+ implementation, or universal superiority of an integer solver.

\ifblind
The relationship to a companion pricing paper is deliberately one-directional.  The anonymized companion reference derives the finite-menu robust-pricing model, its exact full-breakpoint MCKP decomposition, the fixed-threshold hull relaxation and one-item rounding certificate, and exact full-family search.
\else
The relationship to the author's companion pricing paper is deliberately one-directional.  \citet{Shao2026PaperA} derives the finite-menu robust-pricing model, its exact full-breakpoint MCKP decomposition, the fixed-threshold hull relaxation and one-item rounding certificate, and exact full-family search.
\fi
The present paper accepts that scalar threshold family as input and changes the computational objective: it certifies the maximum LP value over many thresholds through one-multiplier simultaneous evaluation and interval minimax bounds.  The baseline cancellation, two-envelope data structure, minimax-dominance theorem, certified multiplier search, and matched-interval experiments appear only here.  Thus Paper~A supplies the model and certifying fixed-threshold foundation; Paper~B supplies an all-threshold LP-family accelerator that can be placed on top of it.

\section{Robust MCKP and prior-art boundary}\label{sec:problem}

\subsection{The \texorpdfstring{$\Gamma$}{Gamma}-robust MCKP}

Let $i\in\{1,\ldots,n\}$ index groups and let $J_i$ be the finite, nonempty option set of group $i$.  Option $(i,j)$ has value $v_{ij}\in\R$, nominal resource contribution $a_{ij}\in\R$, and nonnegative deviation $d_{ij}\ge0$.  Exactly one option is selected from every group.  With an integer uncertainty budget $\Gamma\in\{0,\ldots,n\}$, the problem is
\begin{align}
\max_x\quad & \sum_i\sum_{j\in J_i}v_{ij}x_{ij} \label{eq:robust-mckp}\tag{R-MCKP}\\
\text{s.t.}\quad &
\sum_i\sum_{j\in J_i}a_{ij}x_{ij}
-\max_{\substack{0\le u_i\le1\\\sum_i u_i\le\Gamma}}
\sum_i u_i\sum_{j\in J_i}d_{ij}x_{ij}\ge0,\nonumber\\
&\sum_{j\in J_i}x_{ij}=1 &&(i=1,\ldots,n),\nonumber\\
&x_{ij}\in\{0,1\}.\nonumber
\end{align}
For integer $\Gamma$, the inner maximum is the sum of the $\Gamma$ largest selected deviations.  The model covers a robust resource or residual constraint; the sign convention lets larger $a_{ij}$ represent more nominal slack.

The mathematical results use finite real coefficients.  The released binary64 implementation additionally requires the aggregate objective range $[\sum_i\min_j v_{ij},\sum_i\max_j v_{ij}]$ to lie within the finite binary64 range; it checks this condition and requests objective rescaling otherwise.  This scale precondition prevents an unrepresentable LP value from being mistaken for an exact finite certificate.

For a fixed binary selection, write $d_i:=\sum_jd_{ij}x_{ij}$ for its selected deviation in group $i$.  LP duality for the inner budget problem gives
\begin{equation}\label{eq:top-gamma}
\max_{0\le u\le1,\ \mathbf 1^\top u\le\Gamma}\sum_i d_i u_i
=\min_{\theta\ge0}\left\{\Gamma\theta+\sum_i\pos{d_i-\theta}\right\}.
\end{equation}
Define the global candidate set
\[
\B:=\{0\}\cup\{d_{ij}:i=1,\ldots,n,\ j\in J_i\}
=\{b_0<\cdots<b_{B-1}\},\qquad B:=|\B|.
\]
Because the right-hand side of \eqref{eq:top-gamma} is convex piecewise linear with breakpoints at the selected deviations, a binary selection is robust feasible if and only if there is a $\theta\in\B$ such that
\begin{equation}\label{eq:fixed-threshold-feasibility}
\sum_i\sum_{j\in J_i}r_{ij}(\theta)x_{ij}\ge\Gamma\theta,
\qquad
r_{ij}(\theta):=a_{ij}-\pos{d_{ij}-\theta}.
\end{equation}
Thus \eqref{eq:robust-mckp} is a finite union of fixed-threshold MCKPs.  This classical representation is the starting point.

The exactly-one form also covers binary and at-most-one selection.  Append a dummy option with $(v,a,d)=(0,0,0)$ to an at-most-one group.  More specifically, the robust binary knapsack
\[
\max\left\{\sum_i p_i y_i:\ \sum_i w_i y_i+
\max_{0\le u\le1,\ \mathbf 1^\top u\le\Gamma}\sum_i\delta_i y_i u_i\le W,
\ y\in\{0,1\}^n\right\}
\]
is \eqref{eq:robust-mckp} with two options per item: the unselected option $(0,W/n,0)$ and the selected option $(p_i,W/n-w_i,\delta_i)$.  Hence the results below include robust binary knapsack without changing asymptotic complexity.

For completeness, threshold sufficiency follows directly from convexity.  For a fixed selected deviation vector $q$, the protection function $h(\theta)=\Gamma\theta+\sum_i\pos{q_i-\theta}$ is convex and piecewise linear.  Away from a selected deviation its slope is $\Gamma-|\{i:q_i>\theta\}|$, so a minimum is attained at zero or at a selected deviation, with a flat interval allowed.  The global set $\B$ contains all such candidates.

\subsection{Finite-menu pricing: MCKP derivation and use}\label{sec:pricing-specialization}

This specialization follows the certifying finite-menu pricing framework of \citet{Shao2026PaperA}; the derivation is repeated here only to make the input to the all-threshold oracle explicit.

Finite-menu pricing supplies one operational specialization.  Product $i$ has reference price $\bar p_i>0$, exposure $w_i\ge0$, and admissible prices $p_{ij}>0$, $j\in J_i$; admissibility can encode the reference band $p_{ij}\in[(1-\sigma_i)\bar p_i,(1+\sigma_i)\bar p_i]$ for $\sigma_i\in[0,1)$.  Binary $x_{ij}$ selects exactly one price, so products are the MCKP groups and candidate prices are their options.  At option $j$, demand is $\widetilde g_{ij}=\hat g_{ij}+\xi_i\delta_{ij}$, where $0\le\delta_{ij}\le\hat g_{ij}$, $|\xi_i|\le1$, and $\sum_i|\xi_i|\le\Gamma$.  Let $\widetilde N(x,\xi):=\sum_{i,j}w_ip_{ij}\widetilde g_{ij}x_{ij}$ and $\widetilde D(x,\xi):=\sum_{i,j}w_i\bar p_i\widetilde g_{ij}x_{ij}$.  Nominal revenue $\widetilde N(x,0)$ is maximized subject to $\widetilde N(x,\xi)/\widetilde D(x,\xi)\ge\Delta$ for every admissible $\xi$.

If $\widetilde D>0$ throughout, the robust ratio constraint is equivalent to requiring the worst-case residual $\widetilde N-\Delta\widetilde D$ to be nonnegative.  Because exactly one option is selected per product, adverse signs subtract the $\Gamma$ largest absolute selected residual deviations:
\begin{align}
\min_{|\xi_i|\le1,\ \sum_i|\xi_i|\le\Gamma}
\{\widetilde N(x,\xi)-\Delta\widetilde D(x,\xi)\}
={}&\sum_{i,j}a_{ij}x_{ij}
-\max_{\substack{0\le u_i\le1\\\sum_i u_i\le\Gamma}}
\sum_i u_i\sum_jd_{ij}x_{ij}. \label{eq:pricing-residual}
\end{align}
Together with nominal revenue as the objective, this is exactly \eqref{eq:robust-mckp}, with coefficients
\begin{equation}\label{eq:pricing-map}
v_{ij}=w_ip_{ij}\hat g_{ij},\qquad
a_{ij}=w_i(p_{ij}-\Delta\bar p_i)\hat g_{ij},\qquad
d_{ij}=\left|w_i(p_{ij}-\Delta\bar p_i)\delta_{ij}\right|.
\end{equation}
Without positivity, only the additive residual interpretation remains.

To apply the paper's results, a decision maker specifies the price menus, nominal demands, deviation scales, robustness budget $\Gamma$, and target ratio $\Delta$, then constructs \eqref{eq:pricing-map} once.  Substitution into \eqref{eq:fixed-threshold-feasibility} produces one nominal MCKP relaxation per deviation threshold.  The group-envelope oracle bounds these relaxations simultaneously, and the adaptive algorithm certifies the best LP value over the complete threshold family.  Thus the certificate supplies a root or outer-search upper bound on robust nominal revenue; it accelerates robust price-menu optimization when interval LP work is material, but it does not estimate demand or select $\Gamma$ or $\Delta$.

\FloatBarrier
\subsection{Prior-art boundary}

\Cref{tab:prior-art} separates the task studied here from its ingredients: threshold enumeration and filtering \citep{BertsimasSim2003,AlvarezMiranda2013,LeeKwon2014}, divide-and-conquer \citep{Hansknecht2018}, robust formulations and inequalities \citep{Atamturk2006,FischettiMonaci2012,Monaci2013,JoungPark2021,JoungOhLee2023}, and fixed-MCKP algorithms \citep{SinhaZoltners1979,Zemel1980,Dyer1984,Szkaliczki2025}.  The closest formulation component identified in our review is the bounded-threshold framework with clique aggregation of \citet{BusingGersingKoster2023}.  None of these sources, to our knowledge, derives the baseline cancellation, two-envelope representation, certified simultaneous multiplier evaluation, or its exact-minimax dominance relation for this threshold family.

The formulation component must be distinguished from the complete algorithms in that source.  DnC+ combines filtered threshold sets, strengthened estimators, optimality cuts, incumbent improvement, and early termination of nominal subproblems; its official Java implementation uses Gurobi and solves robust binary models with objective uncertainty.  A direct runtime comparison would therefore change the uncertainty placement, final deliverable, solver stack, and several algorithmic components simultaneously.  Instead, the theoretical and computational comparisons here isolate the bounded-threshold clique relaxation: \Cref{sec:computations} specializes it exactly to the group structure and evaluates it under the same adaptive search as the proposed oracle.  This controlled comparison answers whether the new interval-bound component is stronger and faster; it does not reinterpret that result as a whole-solver comparison with DnC+.

\begin{table}[t]
\centering
\caption{Boundary between the present task and related algorithmic streams.}
\label{tab:prior-art}
\small
\renewcommand{\arraystretch}{1.18}
\begin{tabular}{>{\raggedright\arraybackslash}p{0.24\textwidth}>{\raggedright\arraybackslash}p{0.29\textwidth}>{\raggedright\arraybackslash}p{0.37\textwidth}}
\toprule
Stream & Established operation & Treatment here \\
\midrule
Bertsimas--Sim and filtered enumeration & Solve a finite, possibly reduced set of nominal threshold problems & Simultaneously evaluates a Lagrangian multiplier over all thresholds \\\addlinespace[4pt]
Companion Paper~A \citep{Shao2026PaperA} & Derives robust pricing, the exact full-breakpoint MCKP family, fixed-threshold hull/rounding certificates, and exact full-family search & Takes that family as input and accelerates certification of its maximum LP value; does not repeat Paper~A's pricing or rounding contribution \\\addlinespace[4pt]
Fast fixed MCKP LP & Solves one MCKP relaxation using hull or partition structure & Amortizes evaluation across a parameterized family; does not improve the one-LP complexity \\\addlinespace[4pt]
Divide-and-conquer & Avoids threshold solves using monotonicity & Replaces an interval LP by a direct exactly-one group-envelope bound \\\addlinespace[4pt]
Strong and bounded-threshold formulations & Strengthen or linearize the robust model over a threshold interval & Proves objective-bound dominance over the group-clique interval LP without introducing a new robust formulation \\\addlinespace[4pt]
Ellipsoidal robust MMKP \citep{Caserta2019} & Treats multiple resources and covariance-based uncertainty & Different uncertainty set, geometry, and computational target \\
\bottomrule
\end{tabular}
\end{table}

The argument proceeds in three logical steps.  The threshold representation first converts the robust model into a finite family of fixed-threshold linear relaxations.  Lagrangian duality then replaces that family by a minimax interval certificate and permits comparison with the group-clique relaxation.  Finally, the exactly-one equations expose the cancellation that makes simultaneous evaluation computationally useful.  Keeping these steps separate is important: certificate validity follows from duality, whereas speed follows from the group-envelope data structure.

\section{A Lagrangian bound over threshold intervals}\label{sec:interval-bound}

Fix $\theta\in\B$ and define the group baseline
\[
r_i^*(\theta):=\max_{j\in J_i}r_{ij}(\theta),
\quad c_{ij}(\theta):=r_i^*(\theta)-r_{ij}(\theta)\ge0,
\quad C(\theta):=\sum_i r_i^*(\theta)-\Gamma\theta.
\]
Using the exactly-one equations, \eqref{eq:fixed-threshold-feasibility} is equivalent to
$\sum_{i,j}c_{ij}(\theta)x_{ij}\le C(\theta)$.  If $C(\theta)<0$, the fixed-threshold problem is infeasible.  If $C(\theta)\ge0$, choosing a maximizer of $r_{ij}(\theta)$ in every group gives zero total transformed cost, so the disjunct is feasible.  Its LP relaxation value is
\begin{equation}\label{eq:fixed-lp}
L(\theta):=\max\left\{
\sum_{i,j}v_{ij}x_{ij}:
\sum_{i,j}c_{ij}(\theta)x_{ij}\le C(\theta),\quad
\sum_jx_{ij}=1,\ x\ge0
\right\}.
\end{equation}
Define the feasible-threshold index set
$\mathcal F:=\{k\in\{0,\ldots,B-1\}:C(b_k)\ge0\}$ and assume $\mathcal F\ne\varnothing$; if it is empty, every threshold disjunct is infeasible.  The target certificate is
\begin{equation}\label{eq:target-M}
M:=\max_{k\in\mathcal F}L(b_k),
\end{equation}
an upper bound on the integer robust optimum obtained by relaxing every disjunct.  More explicitly, if $Z_k^{\rm IP}$ is the integer optimum of threshold disjunct $k$, then the classical union representation gives
$Z^{\rm robust}=\max_{k\in\mathcal F}Z_k^{\rm IP}\le\max_{k\in\mathcal F}L(b_k)=M$.

Dualize the capacity constraint of \eqref{eq:fixed-lp} with $\lambda\ge0$:
\begin{equation}\label{eq:dense-D}
D(\lambda,\theta):=\lambda C(\theta)
+\sum_i\max_{j\in J_i}\{v_{ij}-\lambda c_{ij}(\theta)\}.
\end{equation}
For a contiguous index interval $I=\{\ell,\ell+1,\ldots,u\}\subseteq\{0,\ldots,B-1\}$, set
\begin{equation}\label{eq:U-I}
U(I):=\inf_{\lambda\ge0}\max_{k\in I\cap\mathcal F}D(\lambda,b_k),
\end{equation}
where $\max\varnothing:=-\infty$.
If interval $I$ has a nonempty finite evaluated multiplier set $\Lambda_I\subseteq[0,\infty)$, define
$\widehat U_{\Lambda_I}(I):=\min_{\lambda\in\Lambda_I}\max_{k\in I\cap\mathcal F}D(\lambda,b_k)$.

\begin{proposition}[Validity, finite-search safety, and singleton exactness]\label{prop:validity}
For every interval $I$,
\[
\max_{k\in I\cap\mathcal F}L(b_k)\le U(I)\le\widehat U_{\Lambda_I}(I).
\]
For a feasible singleton $I=\{k\}$, $U(\{k\})=L(b_k)$.
\end{proposition}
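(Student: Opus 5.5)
The plan is to argue in three pieces: a weak-duality inequality for each threshold, a trivial comparison of an infimum with a finite minimum, and strong LP duality for the singleton case. If $I\cap\mathcal F=\varnothing$, the left side is $\max\varnothing=-\infty$ and the inequalities hold trivially with the convention $\max\varnothing:=-\infty$. So assume $I\cap\mathcal F\neq\varnothing$.

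\emph{Weak duality.} For every $k\in\mathcal F$ and every $\lambda\ge0$ we show $L(b_k)\le D(\lambda,b_k)$. Take any $x$ feasible for \eqref{eq:fixed-lp} at $\theta=b_k$. Then
\[
\sum_{i,j}v_{ij}x_{ij}\le\sum_{i,j}v_{ij}x_{ij}+\lambda\Bigl(C(b_k)-\sum_{i,j}c_{ij}(b_k)x_{ij}\Bigr)=\lambda C(b_k)+\sum_i\sum_j x_{ij}\bigl(v_{ij}-\lambda c_{ij}(b_k)\bigr).
\]
Since $\sum_j x_{ij}=1$ and $x\ge0$, each inner sum is a convex combination and is therefore at most $\max_j\{v_{ij}-\lambda c_{ij}(b_k)\}$. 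This gives $D(\lambda,b_k)$, and taking the supremum over feasible $x$ gives $L(b_k)\le D(\lambda,b_k)$.

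\emph{First inequality.} From weak duality, $\max_{k\in I\cap\mathcal F}L(b_k)\le\max_{k\in I\cap\mathcal F}D(\lambda,b_k)$ for every $\lambda\ge0$. Taking the infimum over $\lambda\ge0$ gives the first inequality.

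\emph{Second inequality.} Since $\Lambda_I\subseteq[0,\infty)$ is finite and nonempty, the infimum over $[0,\infty)$ is at most the minimum over $\Lambda_I$. This is exactly $U(I)\le\widehat U_{\Lambda_I}(I)$.

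\emph{Singleton exactness.} Let $I=\{k\}$ with $k\in\mathcal F$, so that $U(\{k\})=\inf_{\lambda\ge0}D(\lambda,b_k)$. The relaxation \eqref{eq:fixed-lp} is feasible, because $C(b_k)\ge0$ and choosing a maximizer of $r_{ij}$ in each group has zero transformed cost. It is also bounded, because its feasible region is a product of simplices and hence compact. It is a finite LP, so LP strong duality applies. We write its dual with multiplier $\lambda\ge0$ for the capacity row and free multipliers $\mu_i$ for the group equations:
\[
\min\Bigl\{\lambda C(b_k)+\sum_i\mu_i:\ \mu_i\ge v_{ij}-\lambda c_{ij}(b_k)\ \forall i,j,\ \lambda\ge0\Bigr\}.
\]
For fixed $\lambda$, the optimal choice is $\mu_i=\max_j\{v_{ij}-\lambda c_{ij}(b_k)\}$, so the dual objective reduces to $D(\lambda,b_k)$. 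Hence the dual optimum equals $\inf_{\lambda\ge0}D(\lambda,b_k)$, and strong duality equates it with $L(b_k)$; the infimum is even attained.

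The only step needing care is the singleton case: we must confirm that the finite-feasibility hypothesis $C(b_k)\ge0$ is what licenses strong duality. Without it, $L=-\infty$ while $D$ could stay finite in the limit, which is why the statement restricts the maximum to $\mathcal F$.
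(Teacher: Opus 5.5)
Your proof is correct and follows the paper's argument. You use weak duality per threshold, take the maximum over $k\in I\cap\mathcal F$ followed by the infimum, and observe that restricting the infimum to the finite set $\Lambda_I$ can only raise it. For the singleton you invoke LP strong duality for the feasible bounded fixed-threshold LP; your explicit elimination of the free group multipliers $\mu_i$ is exactly the paper's ``Lagrangian dual obtained after optimizing over the product of group simplices.''

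Your closing aside needs one correction. If $C(b_k)<0$, every group still contains an option with $c_{ij}(b_k)=0$, so $D(\lambda,b_k)\le\lambda C(b_k)+\sum_i\max_j v_{ij}\to-\infty$. Hence $\inf_{\lambda\ge0}D(\lambda,b_k)=-\infty=L(b_k)$ still holds, and feasibility is not what makes strong duality work. This remark plays no role in your proof.
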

\begin{proof}
If $I\cap\mathcal F=\varnothing$, then every displayed quantity equals $-\infty$ by convention, so the claim is immediate.  Assume henceforth that $I\cap\mathcal F\ne\varnothing$.
For every $k\in\mathcal F$ and $\lambda\ge0$, weak duality gives $L(b_k)\le D(\lambda,b_k)$.  Maximizing over $k\in I\cap\mathcal F$ and then taking the infimum over $\lambda$ gives the first inequality.  Restricting an infimum to $\Lambda_I$ can only increase its value, giving the second.  On a feasible singleton, \eqref{eq:dense-D} is the Lagrangian dual obtained after optimizing over the product of group simplices.  The feasible bounded LP \eqref{eq:fixed-lp} satisfies strong duality, hence $L(b_k)=\inf_{\lambda\ge0}D(\lambda,b_k)$.
\end{proof}

To compare the minimax bound formally with the closest prior-art relaxation, let
$\underline\theta=b_\ell$, $\overline\theta=b_u$, and define $Q(I)$ as the optimum of the bounded-threshold group-clique LP
\begin{align}
\max_{x,p,z}\quad &\sum_{i,j}v_{ij}x_{ij} \label{eq:clique-main}\\
\text{s.t.}\quad
&\sum_{i,j}\left[-a_{ij}+\pos{d_{ij}-\overline\theta}\right]x_{ij}
 +\sum_i p_i+\Gamma z\le-\Gamma\underline\theta,\nonumber\\
&\sum_j\pos{\min\{d_{ij},\overline\theta\}-\underline\theta}x_{ij}
 \le p_i+z &&(i=1,\ldots,n),\nonumber\\
&\sum_jx_{ij}=1 &&(i=1,\ldots,n),\nonumber\\
&x,p\ge0,\qquad 0\le z\le\overline\theta-\underline\theta.\nonumber
\end{align}
This is the exactly-one specialization of the bounded-threshold clique construction of \citet{BusingGersingKoster2023}.  We use the extended-value convention $Q(I)=-\infty$ if this maximization problem is infeasible.

\begin{theorem}[Dominance by the exact minimax envelope]\label{thm:dominance}
For every threshold interval $I$, the exact minimax group-envelope bound is no larger than the bounded-threshold group-clique bound:
\[
\max_{k\in I\cap\mathcal F}L(b_k)\le U(I)\le Q(I).
\]
Thus $U(I)$ is at least as strong as the comparator relaxation for the objective under study, while avoiding an interval LP solve.
\end{theorem}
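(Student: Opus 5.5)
The first inequality, $\max_{k\in I\cap\mathcal F}L(b_k)\le U(I)$, is already part of \Cref{prop:validity}. So the work is to show $U(I)\le Q(I)$. The plan is to Lagrangian-dualize the single capacity row of the clique LP \eqref{eq:clique-main} with a multiplier $\lambda\ge0$. For every fixed $\lambda$ the clique Lagrangian will turn out to dominate $D(\lambda,b_k)$ simultaneously for every $k\in I\cap\mathcal F$.

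\emph{Degenerate cases.} If $I\cap\mathcal F=\varnothing$, then $U(I)=-\infty$ and there is nothing to prove. Otherwise pick $k\in I\cap\mathcal F$ and set $\theta=b_k\in[\underline\theta,\overline\theta]$. The construction below, applied to an $x$ that selects a maximizer of $r_{ij}(\theta)$ in each group, yields a feasible point of \eqref{eq:clique-main}. Hence $Q(I)>-\infty$. Since $x$ ranges over a product of simplices, the objective is also bounded, so $Q(I)$ is finite.

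\emph{Lagrangian of the clique LP.} Define
\[
\Phi(\lambda):=-\lambda\Gamma\underline\theta+\max_{(x,p,z)\in P}\Bigl\{\sum_{i,j}v_{ij}x_{ij}-\lambda\Bigl(\sum_{i,j}\bigl[-a_{ij}+\pos{d_{ij}-\overline\theta}\bigr]x_{ij}+\sum_ip_i+\Gamma z\Bigr)\Bigr\},
\]
where $P$ is the polyhedron cut out by the remaining constraints of \eqref{eq:clique-main} (the clique rows, the exactly-one equations, $x,p\ge0$ and $0\le z\le\overline\theta-\underline\theta$). Strong LP duality for a feasible bounded LP with one dualized inequality gives $Q(I)=\min_{\lambda\ge0}\Phi(\lambda)$. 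It therefore suffices to prove $\Phi(\lambda)\ge D(\lambda,b_k)$ for every $\lambda\ge0$ and every $k\in I\cap\mathcal F$. Then $Q(I)=\min_\lambda\Phi(\lambda)\ge\inf_\lambda\max_kD(\lambda,b_k)=U(I)$.

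\emph{Key step: a pointwise lower bound on $\Phi$.} Fix $\lambda$ and $k$, and write $\theta=b_k$. Choose an integral $x$ that selects, in each group $i$, an option $j_i$ maximizing $v_{ij}+\lambda r_{ij}(\theta)$. Set $z=\theta-\underline\theta$, which lies in $[0,\overline\theta-\underline\theta]$, and set $p_i=\pos{\min\{d_{ij_i},\overline\theta\}-\theta}$. This point lies in $P$ because, for $\theta\ge\underline\theta$,
\[
\pos{\min\{d,\overline\theta\}-\underline\theta}\le\pos{\min\{d,\overline\theta\}-\theta}+(\theta-\underline\theta).
\]
The algebraic core is the identity $\pos{d-\overline\theta}+\pos{\min\{d,\overline\theta\}-\theta}=\pos{d-\theta}$, valid for $\theta\le\overline\theta$. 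With it, the dualized row at this point equals
\[
-\sum_i r_{ij_i}(\theta)+\Gamma(\theta-\underline\theta).
\]
Substituting this into $\Phi$, the $\lambda\Gamma\underline\theta$ terms cancel, and the resulting Lagrangian value is
\[
\sum_i\max_j\{v_{ij}+\lambda r_{ij}(\theta)\}-\lambda\Gamma\theta .
\]
Writing $r_{ij}=r_i^*-c_{ij}$, this equals $\lambda C(\theta)+\sum_i\max_j\{v_{ij}-\lambda c_{ij}(\theta)\}=D(\lambda,\theta)$ by \eqref{eq:dense-D}. Since $\Phi(\lambda)$ is a maximum over $P$, it is at least this value, which is the required bound.

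\emph{Expected obstacle.} I expect the main difficulty to be verifying the positive-part identity and the clique-row feasibility inequality. Both need a case split on $d\le\theta$, $\theta<d\le\overline\theta$, and $d>\overline\theta$, and this is exactly where the assumption $\theta\in[\underline\theta,\overline\theta]$ is used. The remaining care is the duality bookkeeping: attainment of $\min_\lambda\Phi$, the extended-value conventions, and the fact that the $\lambda$ attaining $Q$ need not be the same for different $k$. The last point is harmless, because the bound $\Phi(\lambda)\ge D(\lambda,b_k)$ holds for every $\lambda$ and every $k$ simultaneously.
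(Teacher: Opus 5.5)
Your proof is correct, but it runs in the opposite direction from the paper's.

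\textbf{The paper's route.} It writes the epigraph LP for $U(I)$ and takes its full LP dual. That dual is a convex mixture $\mu$ over $I\cap\mathcal F$ of threshold-indexed group-simplex points $y_{kij}$, coupled by one aggregated capacity row. The paper then maps every dual-feasible $(\mu,y)$ into \eqref{eq:clique-main} with the same objective, via
\[
x_{ij}=\sum_k y_{kij},\qquad z=\sum_k\mu_k(b_k-\underline\theta),\qquad p_i=\sum_{k,j}\pos{\min\{d_{ij},\overline\theta\}-b_k}y_{kij}.
\]

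\textbf{Your route.} You dualize only the capacity row of the clique LP and prove the pointwise inequality $\Phi(\lambda)\ge\max_{k\in I\cap\mathcal F}D(\lambda,b_k)$ for every $\lambda\ge0$. Each pair $(\lambda,k)$ is witnessed by a single integral point. That point is exactly the paper's map applied to a point mass $\mu=e_k$ and a vertex $y$. The same two positive-part facts do all the work: the clique-row inequality and the splitting identity for $\pos{d-b_k}$.

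\textbf{What your approach buys.} It needs strong duality only for the clique LP, which you justify correctly by building a feasible point from $C(b_k)\ge0$ and noting the objective is bounded on the simplices. It avoids the epigraph dual and its aggregated-capacity bookkeeping. It also gives a stronger, multiplier-wise conclusion: at every common multiplier, $\phi_I(\lambda)$ is no larger than the clique Lagrangian bound $\Phi(\lambda)$, not merely at the optima. This fits naturally with the evaluated-multiplier bounds of \Cref{thm:certified-minimax}.

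\textbf{What the paper's approach buys.} It gives a primal description of $U(I)$: the value of an LP over mixtures of threshold disjuncts with capacity enforced only on average, whose projection lies inside the clique polytope. This explains geometrically both why the minimax bound dominates the clique relaxation and where it loses to $\max_k L(b_k)$. If capacity were imposed separately for each threshold, the mixture LP would collapse to the complete scan.
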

\begin{proof}
If $I\cap\mathcal F=\varnothing$, then $U(I)=-\infty$ and both inequalities are immediate.  Hence assume $I\cap\mathcal F\ne\varnothing$.
The epigraph formulation of \eqref{eq:U-I} is a linear program.  Its dual has variables $\mu_k\ge0$ and $y_{kij}\ge0$ and can be written
\begin{align*}
\max\quad &\sum_{k\in I\cap\mathcal F}\sum_{i,j}v_{ij}y_{kij}\\
\text{s.t.}\quad
&\sum_{k\in I\cap\mathcal F}\mu_k=1,\qquad
\sum_jy_{kij}=\mu_k &&(k,i),\\
&\sum_{k,i,j}c_{ij}(b_k)y_{kij}
 \le\sum_k\mu_k C(b_k).
\end{align*}
Strong duality applies because the primal epigraph is feasible and has a finite optimum.  The dual can be read as a convex mixture of threshold-indexed group-simplex points coupled by one aggregate capacity inequality.  Take any feasible dual solution and set
\begin{align*}
x_{ij}&:=\sum_k y_{kij},
&z&:=\sum_k\mu_k(b_k-\underline\theta),\\
p_i&:=\sum_{k,j}\pos{\min\{d_{ij},\overline\theta\}-b_k}y_{kij}.
\end{align*}
The simplex equations and bounds on $z$ follow immediately.  For every $b_k\in[\underline\theta,\overline\theta]$,
\[
\pos{\min\{d_{ij},\overline\theta\}-\underline\theta}
\le (b_k-\underline\theta)
+\pos{\min\{d_{ij},\overline\theta\}-b_k},
\]
so the group-clique rows hold after multiplication by $y_{kij}$ and summation.  Expanding the aggregate capacity row of the dual and using $\sum_jy_{kij}=\mu_k$ cancels the group baselines and gives
\[
\sum_{k,i,j}r_{ij}(b_k)y_{kij}\ge
\Gamma\sum_k\mu_k b_k.
\]
The identity
$\pos{d-b_k}=\pos{d-\overline\theta}+\pos{\min\{d,\overline\theta\}-b_k}$
then gives the first constraint of \eqref{eq:clique-main}.  The mapped point has objective $\sum_{k,i,j}v_{ij}y_{kij}$, so every feasible dual value is attained by a feasible clique-LP point.  Maximizing and applying strong duality proves $U(I)\le Q(I)$.
\end{proof}

The finite evaluated bound $\widehat U_{\Lambda_I}(I)$ remains valid by \Cref{prop:validity}, but an arbitrary coarse multiplier set need not inherit \Cref{thm:dominance}.  \Cref{thm:certified-minimax} below closes this gap by quantifying the distance between an explicitly evaluated bound and $U(I)$.

The order of maximum and infimum matters.  Generally
$\max_k\inf_\lambda D(\lambda,b_k)\le\inf_\lambda\max_kD(\lambda,b_k)$, so a nonsingleton interval bound can exceed the complete fixed-threshold scan.  Successive splitting tightens this relaxation, and complete refinement to feasible singletons eliminates it; the evaluation algorithm below removes its computational bottleneck.

\section{Simultaneous group-envelope evaluation}\label{sec:envelope}

\begin{theorem}[Baseline cancellation]\label{thm:cancellation}
For every $\lambda\ge0$ and $\theta\in\B$,
\begin{equation}\label{eq:cancelled-D}
D(\lambda,\theta)
=\sum_i\max_{j\in J_i}\{v_{ij}+\lambda r_{ij}(\theta)\}
-\lambda\Gamma\theta.
\end{equation}
\end{theorem}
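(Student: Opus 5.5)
The plan is a direct substitution into the definition \eqref{eq:dense-D}, using only the definitions of $c_{ij}(\theta)$ and $C(\theta)$ from \Cref{sec:interval-bound}. No duality argument is needed; the identity is purely algebraic once we see that the baseline $r_i^*(\theta)$ does not depend on the option index $j$.

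First, substitute $c_{ij}(\theta)=r_i^*(\theta)-r_{ij}(\theta)$ into the group term. This gives
\[
v_{ij}-\lambda c_{ij}(\theta)=v_{ij}+\lambda r_{ij}(\theta)-\lambda r_i^*(\theta).
\]
Since $\lambda r_i^*(\theta)$ is constant in $j$, it factors out of the maximum:
\[
\max_{j\in J_i}\{v_{ij}-\lambda c_{ij}(\theta)\}=\max_{j\in J_i}\{v_{ij}+\lambda r_{ij}(\theta)\}-\lambda r_i^*(\theta).
\]
This step is valid because $J_i$ is finite and nonempty, so every maximum is attained and finite, and translating every entry by a common constant translates the maximum by that constant. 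The sign of $\lambda$ plays no role here.

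Second, sum over $i$ and add $\lambda C(\theta)=\lambda\sum_i r_i^*(\theta)-\lambda\Gamma\theta$. The terms $-\lambda\sum_i r_i^*(\theta)$ from the group maxima cancel exactly against the positive baseline part of $\lambda C(\theta)$. What remains is $\sum_i\max_{j}\{v_{ij}+\lambda r_{ij}(\theta)\}-\lambda\Gamma\theta$, which is \eqref{eq:cancelled-D}.

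There is no genuine obstacle. The only point needing care is checking that the per-group baseline appears with coefficient exactly $\lambda$ in both places, once inside the maximum and once in $C(\theta)$, so the cancellation is exact for every $\lambda\ge0$ and $\theta\in\B$, including $\lambda=0$. This also explains why the result depends on the exactly-one equations: they are what allowed the capacity row to be rewritten with the baselines $r_i^*(\theta)$ in the first place.
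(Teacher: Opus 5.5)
Your proof is correct and follows the paper's argument: you substitute $c_{ij}(\theta)=r_i^*(\theta)-r_{ij}(\theta)$ and $C(\theta)=\sum_i r_i^*(\theta)-\Gamma\theta$ into \eqref{eq:dense-D}, pull the $j$-independent term $\lambda r_i^*(\theta)$ out of each group maximum, and cancel it against the baseline part of $\lambda C(\theta)$, which leaves $-\lambda\Gamma\theta$. Your remarks that this holds for every $\lambda$, including $\lambda=0$, and that finiteness of each $r_i^*(\theta)$ comes from $J_i$ being finite and nonempty are exactly the only checks the identity needs.
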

\begin{proof}
Substitute $C(\theta)=\sum_i r_i^*(\theta)-\Gamma\theta$ and
$c_{ij}(\theta)=r_i^*(\theta)-r_{ij}(\theta)$ into \eqref{eq:dense-D}.  For each group,
\[
\lambda r_i^*(\theta)+\max_j\{v_{ij}-\lambda[r_i^*(\theta)-r_{ij}(\theta)]\}
=\max_j\{v_{ij}+\lambda r_{ij}(\theta)\}.
\]
Summing and retaining $-\lambda\Gamma\theta$ proves the identity.
\end{proof}

The cancellation is consequential because the remaining threshold dependence has only two forms.  For a group $i$, define
\begin{align}
A_i(\lambda,\theta)&:=\max_{j:d_{ij}\le\theta}\{v_{ij}+\lambda a_{ij}\},\label{eq:A}\\
Q_i(\lambda,\theta)&:=\max_{j:d_{ij}>\theta}\{v_{ij}+\lambda(a_{ij}-d_{ij})\},\label{eq:Q}
\end{align}
where the maximum of an empty set is $-\infty$.

\begin{lemma}[Two-envelope representation]\label{lem:two-envelope}
For $\lambda>0$,
\begin{equation}\label{eq:two-envelope}
\max_j\{v_{ij}+\lambda r_{ij}(\theta)\}
=\max\{A_i(\lambda,\theta),Q_i(\lambda,\theta)+\lambda\theta\}.
\end{equation}
Between consecutive distinct deviations of group $i$, $A_i$ and $Q_i$ are constant.  Hence the group contribution is the maximum of one constant and one line of slope $\lambda$ on that local interval, with at most one crossover.
\end{lemma}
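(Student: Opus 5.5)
The plan is to partition the options of group $i$ by the sign of $d_{ij}-\theta$ and evaluate $r_{ij}(\theta)=a_{ij}-\pos{d_{ij}-\theta}$ separately on each class. If $d_{ij}\le\theta$, then $\pos{d_{ij}-\theta}=0$, so $v_{ij}+\lambda r_{ij}(\theta)=v_{ij}+\lambda a_{ij}$. If $d_{ij}>\theta$, then $\pos{d_{ij}-\theta}=d_{ij}-\theta$, so $v_{ij}+\lambda r_{ij}(\theta)=v_{ij}+\lambda(a_{ij}-d_{ij})+\lambda\theta$. The maximum over $J_i$ is the larger of the maxima over the two classes. The first class gives $A_i(\lambda,\theta)$ by \eqref{eq:A}. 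The second gives $Q_i(\lambda,\theta)+\lambda\theta$, since adding the constant $\lambda\theta$ commutes with the maximum. Under the convention $\max\varnothing=-\infty$, an empty class contributes $-\infty$ and drops out of the outer maximum. Because $J_i$ is nonempty, at least one class is nonempty, so the right-hand side of \eqref{eq:two-envelope} is finite. This proves the identity. It in fact holds for every $\lambda\ge0$; the hypothesis $\lambda>0$ is needed only for the slope statement.

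For the piecewise claim, list the distinct deviations of group $i$ as $\delta_1<\cdots<\delta_m$ and fix a local interval, either $[\delta_s,\delta_{s+1})$ or one of the two unbounded end pieces. For $\theta$ ranging over such an interval, the index sets $\{j:d_{ij}\le\theta\}$ and $\{j:d_{ij}>\theta\}$ stay the same. The summands $v_{ij}+\lambda a_{ij}$ and $v_{ij}+\lambda(a_{ij}-d_{ij})$ do not depend on $\theta$. Hence $A_i$ and $Q_i$ are constant there, say $\alpha$ and $\beta$. The group contribution then has the form $\max\{\alpha,\beta+\lambda\theta\}$: a constant and a line whose slope $\lambda$ is the same for every option and every group. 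If either of $\alpha,\beta$ equals $-\infty$, only one piece remains and there is no crossover. Otherwise $\beta+\lambda\theta-\alpha$ is strictly increasing in $\theta$ because $\lambda>0$. It therefore changes sign at most once, namely at $\theta^\star=(\alpha-\beta)/\lambda$, so there is at most one crossover.

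The main obstacle is the bookkeeping at the endpoints. The split uses $d_{ij}\le\theta$ versus $d_{ij}>\theta$, so the local intervals must be taken half-open on the right, matching that inequality. At $\theta=\delta_s$ itself, the option with $d_{ij}=\delta_s$ lies in the $A$ class. This is consistent, because $\pos{d_{ij}-\theta}$ is continuous and equals $0$ at $\theta=d_{ij}$. Both formulas therefore agree at the boundary, and the representation is continuous in $\theta$ even though $A_i$ and $Q_i$ jump. I will state this continuity remark explicitly so that evaluating at grid points $b_k\in\B$ that coincide with deviations is unambiguous.
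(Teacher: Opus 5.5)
Your proof is correct and follows the paper's argument. Like the paper, you split the options by $d_{ij}\le\theta$ versus $d_{ij}>\theta$, maximize separately over the two classes, and observe that these classes are fixed between consecutive distinct deviations. Your added details are accurate and make explicit what the paper leaves implicit: half-open local intervals matching the saturated-set convention, strict monotonicity of $\beta+\lambda\theta-\alpha$ to get a single crossover, and validity of the identity at $\lambda=0$.
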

\begin{proof}
If $d_{ij}\le\theta$, then $r_{ij}(\theta)=a_{ij}$ and the option score is constant in $\theta$.  If $d_{ij}>\theta$, then
$r_{ij}(\theta)=a_{ij}-d_{ij}+\theta$, and the score has intercept $v_{ij}+\lambda(a_{ij}-d_{ij})$ and slope $\lambda$.  Maximizing separately over the two sets yields \eqref{eq:two-envelope}.  Those sets are unchanged between consecutive local deviations.
\end{proof}

\subsection{Prefix--suffix range algorithm}

Sort each group by $d_{ij}$.  Before multiplier queries, applying the same unmasked accumulation with $v_{ij}=0$ and $\lambda=1$ computes every $C(b_k)$ and hence $\mathcal F$.  The global threshold indices split into maximal contiguous blocks on which the local saturated set $\{j:d_{ij}\le b_k\}$ is unchanged; all options tied at a block's left endpoint are saturated.  At a fixed $\lambda$, prefix maxima of $v_{ij}+\lambda a_{ij}$ supply the constant values in \eqref{eq:A}, while reverse suffix maxima of $v_{ij}+\lambda(a_{ij}-d_{ij})$ supply the active-line intercepts in \eqref{eq:Q}.  If both envelopes are finite on a block, their crossover is $(A_i-Q_i)/\lambda$; a binary search finds the first global threshold at or above it.  If one envelope is empty, the other applies on the entire block.  Two difference arrays accumulate the resulting constant and linear pieces; cumulative sums recover the group total at all global thresholds.  Finally subtract $\lambda\Gamma b_k$.

\begin{algorithm}[!htbp]
\caption{One simultaneous multiplier evaluation}\label{alg:simultaneous}
\small
\begin{algorithmic}[1]
\Require Global thresholds $b_0<\cdots<b_{B-1}$; deviation-sorted groups; feasibility mask $\mathcal F$; $\lambda\ge0$
\If{$\lambda=0$}
  \State Set every entry to $\sum_i\max_j v_{ij}$
  \State Mask indices outside $\mathcal F$ by $-\infty$; \Return the vector
\EndIf
\State Initialize intercept and slope difference arrays of length $B+1$
\For{each group $i$}
  \State Compute prefix maxima of $v_{ij}+\lambda a_{ij}$
  \State Compute suffix maxima of $v_{ij}+\lambda(a_{ij}-d_{ij})$
  \For{each local deviation interval}
    \State Read its constant $A$ and active-line intercept $Q$
    \If{$A=-\infty$}
      \State Range-add $Q+\lambda b_k$ on the interval
    \ElsIf{$Q=-\infty$}
      \State Range-add $A$ on the interval
    \Else
      \State Locate the clipped first index with $b_k\ge(A-Q)/\lambda$
      \State Range-add $A$ before that index and $Q+\lambda b_k$ from it onward
    \EndIf
  \EndFor
\EndFor
\State Cumulatively sum both arrays and subtract $\lambda\Gamma b_k$
\State Mask indices outside $\mathcal F$; return the $B$ values
\end{algorithmic}
\end{algorithm}

\begin{theorem}[Exact evaluation and complexity]\label{thm:complexity}
Let $K:=\sum_i|J_i|$.  In real arithmetic, after sorting the global thresholds and each group, Algorithm~\ref{alg:simultaneous} returns the values in \eqref{eq:cancelled-D} on $\mathcal F$ and $-\infty$ elsewhere for a fixed multiplier in
\[
O(B+K\log(B+1))\text{ time and }O(B+K)\text{ working storage}.
\]
Constructing the global threshold set and sorting the groups costs $O(K\log K)$.  A direct score evaluation at every option--threshold pair takes $\Theta(BK)$ arithmetic operations; fully materializing those scores also takes $\Theta(BK)$ storage.
\end{theorem}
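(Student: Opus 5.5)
The argument splits into correctness and cost; correctness reduces to the earlier results.

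\emph{Correctness.} For $\lambda=0$, \eqref{eq:cancelled-D} reduces to $\sum_i\max_j v_{ij}$, which is what the first branch returns. For $\lambda>0$, \Cref{thm:cancellation} writes $D(\lambda,b_k)$ as a sum of group scores minus $\lambda\Gamma b_k$. \Cref{lem:two-envelope} gives each group score as $\max\{A_i,Q_i+\lambda b_k\}$.

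Fix group $i$ with options sorted by deviation, $d_{i(1)}\le\cdots\le d_{i(m)}$. The global indices split into maximal blocks on which the saturated set $\{j:d_{ij}\le b_k\}$ is a fixed prefix $\{(1),\dots,(s)\}$. Since every $d_{ij}$ lies in $\B$, each block is the index range from the position of one distinct local deviation (or $0$) up to the position of the next, minus one. Ties at the left endpoint belong to the prefix because of the non-strict inequality in \eqref{eq:A}. On such a block:
\begin{itemize}
\item $A_i$ equals the prefix maximum of $v_{ij}+\lambda a_{ij}$ up to $s$;
\item $Q_i$ equals the suffix maximum of $v_{ij}+\lambda(a_{ij}-d_{ij})$ from $s+1$;
\item both are constant in $k$.
\end{itemize}
When both are finite, $Q_i+\lambda b_k\ge A_i$ if and only if $b_k\ge (A_i-Q_i)/\lambda$, and this set is a suffix of the block because $b$ is increasing. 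So the first index at or above the crossover, clipped to the block, splits the block into a constant piece followed by a line piece. When one envelope is empty, the other applies on the whole block.

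A range-add of $\alpha+\beta b_k$ on $[p,q]$ is recorded by four difference-array updates: $+\alpha$ and $+\beta$ at $p$, and $-\alpha$ and $-\beta$ at $q+1$. Prefix sums then give, at each $k$, the intercept total and slope total, and the evaluated value is intercept $+$ slope$\cdot b_k$. Summing over groups reproduces $\sum_i\max_j\{v_{ij}+\lambda r_{ij}(b_k)\}$ exactly in real arithmetic. Subtracting $\lambda\Gamma b_k$ and masking by $\mathcal F$ completes correctness.

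\emph{Complexity.} For group $i$:
\begin{itemize}
\item the prefix and suffix maxima cost $O(|J_i|)$;
\item there are at most $|J_i|+1$ blocks;
\item each block costs $O(1)$ range-adds plus one binary search over the global thresholds, which is $O(\log(B+1))$.
\end{itemize}
Block endpoints in global index space are also located by binary search, or precomputed once at sorting time. Summing over groups gives $O(K\log(B+1))$. Initialization, cumulative sums, subtraction and masking cost $O(B)$. Storage is two arrays of length $B+1$, the output vector, and $O(|J_i|)$ reusable per-group buffers, so $O(B+K)$ in total.

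\emph{Preprocessing and the dense baseline.} Building $\B$ and sorting all $K$ deviations, plus sorting each group, costs $O(K\log K)$. The dense baseline evaluates $v_{ij}+\lambda r_{ij}(b_k)$ for each of the $BK$ pairs. This is $\Theta(BK)$ operations, since each pair needs constant work and every pair is touched; storing all the scores needs $\Theta(BK)$ cells.

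\emph{Main obstacle.} The delicate point is boundary bookkeeping. Blocks must be defined by indices with $b_k\ge d$ rather than $>d$, so that ties saturate correctly. The clipped crossover index must handle a crossover equal to some $b_k$ (where either piece is correct, since the two values agree) and crossovers outside the block. I would handle this with a short invariant: after group $i$ is processed, the prefix-summed arrays reproduce group $i$'s score on every index.
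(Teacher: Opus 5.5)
Your proposal is correct and follows essentially the same route as the paper. Both arguments rest on exactness on each local block via \Cref{lem:two-envelope}, at most $|J_i|+1$ blocks per group, $O(K)$ prefix/suffix maxima, one binary search per block, $O(B)$ cumulative sums, and $O(B+K)$ storage; your explicit $\lambda=0$ branch, difference-array bookkeeping, and dense $\Theta(BK)$ count add detail the paper leaves implicit, and the paper's extra remark that the same routine with all $v_{ij}=0$ and $\lambda=1$ builds $\mathcal F$ is not needed here, since the algorithm takes $\mathcal F$ as input.
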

\begin{proof}
\Cref{lem:two-envelope} proves exactness on every local interval, including repeated deviations and endpoints under the convention $d_{ij}\le\theta$ for the saturated set.  Each group produces at most $|J_i|+1$ local intervals.  Prefix and suffix maxima cost $O(K)$.  Every local interval performs constant work and one binary search in the $B$ global thresholds, totaling $O(K\log(B+1))$.  Moreover, setting all $v_{ij}=0$ and $\lambda=1$ in \eqref{eq:cancelled-D} yields $\sum_i\max_j r_{ij}(b_k)-\Gamma b_k=C(b_k)$, so the same range algorithm constructs the complete feasibility mask within the stated complexity.  The cumulative sums and mask application cost $O(B)$.  The sorted group arrays, local-interval records, and two difference arrays occupy $O(K+B)$ storage.
\end{proof}

\subsection{Certified minimization over the multiplier}

Write $\phi_I(\lambda):=\max_{k\in I\cap\mathcal F}D(\lambda,b_k)$ and define
\begin{equation}\label{eq:lipschitz}
H_I:=\sum_i\max_{j\in J_i}\max\{|a_{ij}|,|a_{ij}-d_{ij}|\}
+\Gamma\max_{k\in I}|b_k|.
\end{equation}
The implementation brackets a minimizer before contracting the bracket.  Starting from zero and a positive scale, it doubles the positive endpoint while the evaluated objective strictly decreases.  Once two consecutive values are nondecreasing, the interval from zero to the latter point contains a minimizer.  Golden-section comparisons then contract this interval.  The algorithm returns the smallest explicitly evaluated value at a multiplier inside the final bracket; it does not use an optimizer-interpolated objective.

\begin{theorem}[Certified deployed minimax bound]\label{thm:certified-minimax}
For every interval with $I\cap\mathcal F\ne\varnothing$, $\phi_I$ is finite, convex, piecewise linear, and $H_I$-Lipschitz on $[0,\infty)$.  The geometric expansion above terminates after finitely many evaluations and produces a bracket $[\ell,u]$ containing a minimizer.  For every evaluated $\bar\lambda\in[\ell,u]$,
\begin{equation}\label{eq:certified-gap}
U(I)\le \bar U(I):=\phi_I(\bar\lambda)
\le U(I)+H_I(u-\ell)\le Q(I)+H_I(u-\ell).
\end{equation}
Consequently, stopping when $H_I(u-\ell)\le\eta_I$ yields a valid evaluated interval bound whose excess over the exact minimax value, and over the clique-dominance benchmark, is at most $\eta_I$.  For a feasible singleton, $L(b_k)\le\bar U(\{k\})\le L(b_k)+\eta_{\{k\}}$.
\end{theorem}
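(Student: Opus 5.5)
The plan is to establish the structural properties of $\phi_I$ first, then the bracketing claims, and then chain the inequalities in \eqref{eq:certified-gap} using \Cref{prop:validity} and \Cref{thm:dominance}.

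\emph{Structure of $\phi_I$.} By \Cref{thm:cancellation}, for each fixed $k$ the function $D(\lambda,b_k)$ is a sum of pointwise maxima of affine functions $\lambda\mapsto v_{ij}+\lambda r_{ij}(b_k)$, minus the linear term $\lambda\Gamma b_k$. It is therefore finite, convex and piecewise linear in $\lambda$. Since $I\cap\mathcal F$ is nonempty and finite, $\phi_I$ is a finite maximum of such functions and inherits these properties.

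For the Lipschitz constant, I will bound the slope of every affine piece. Each option slope is $r_{ij}(b_k)$, which equals either $a_{ij}$ or $a_{ij}-d_{ij}+b_k$. In the second case $d_{ij}>b_k\ge0$, so $r_{ij}(b_k)$ lies between $a_{ij}-d_{ij}$ and $a_{ij}$. Hence $|r_{ij}(b_k)|\le\max\{|a_{ij}|,|a_{ij}-d_{ij}|\}$. Summing over groups and adding $\Gamma|b_k|$ bounds every slope of $D(\cdot,b_k)$ by $H_I$. A maximum of $H_I$-Lipschitz functions is $H_I$-Lipschitz.

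\emph{Existence of a minimizer and termination of the expansion.} I will show that $\phi_I$ attains its infimum. Take any $k\in I\cap\mathcal F$. Then $C(b_k)\ge0$, and the LP \eqref{eq:fixed-lp} is feasible and bounded, so by \Cref{prop:validity} we have $\phi_I\ge D(\cdot,b_k)\ge L(b_k)>-\infty$. A convex piecewise-linear function with finitely many pieces that is bounded below has a last piece of slope $\ge0$, so it attains its minimum at a finite breakpoint $\lambda^\dagger$ (or at $0$). Beyond the largest breakpoint $\phi_I$ is affine with nonnegative slope, so it is nondecreasing there. The doubling sequence eventually passes that point, after which consecutive values cannot strictly decrease, and the expansion stops.

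At stopping we have $\phi(\lambda_{m})\le\phi(\lambda_{m+1})$ with $\lambda_{m+1}=u$. By convexity, $\phi$ is nondecreasing on $[\lambda_m,\infty)$, so some minimizer lies in $[0,u]$. Golden-section contraction preserves the invariant that a minimizer lies in the current bracket. This follows from the standard convexity (unimodality) argument: if $\phi(x_1)\le\phi(x_2)$ with $x_1<x_2$, then $\phi$ is nondecreasing on $[x_2,\infty)$, so a minimizer lies in $[\ell,x_2]$, and symmetrically in the other case. I will treat equal comparisons carefully, since ties on flat pieces are the main subtlety. The resolution is that ties still leave a minimizer in either retained subinterval, because any point of a flat minimal piece is a minimizer, and otherwise the strict inequality argument applies.

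\emph{The inequality chain.} Let $\lambda^*\in[\ell,u]$ be a minimizer, so $\phi_I(\lambda^*)=U(I)$. For any evaluated $\bar\lambda\in[\ell,u]$, we have $U(I)\le\phi_I(\bar\lambda)$ because $U(I)$ is the infimum. The Lipschitz property gives $\phi_I(\bar\lambda)\le\phi_I(\lambda^*)+H_I|\bar\lambda-\lambda^*|\le U(I)+H_I(u-\ell)$. Applying \Cref{thm:dominance} yields the final $Q(I)$ term. Taking the smallest evaluated value inside the bracket only lowers $\bar U$ while keeping it $\ge U(I)$.

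The stopping-rule consequence is immediate. For a feasible singleton, \Cref{prop:validity} gives $U(\{k\})=L(b_k)$, which yields the last display.

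The main obstacle I expect is the bracketing step: justifying that a minimizer lies in the bracket despite ties and flat pieces, and that the expansion terminates. Both rest on boundedness below, which comes from the feasibility of some $k$ together with weak duality.
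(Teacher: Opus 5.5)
Your proposal is correct and follows the paper's proof essentially step for step. The only variation is that you get the eventually nonnegative slope of $\phi_I$ from boundedness below via weak duality, $D(\cdot,b_k)\ge L(b_k)>-\infty$, whereas the paper reads it off the baseline form \eqref{eq:dense-D} as the eventual slope $C(b_k)\ge0$, since every group has a zero-cost option.

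One small slip: $\phi(\lambda_m)\le\phi(\lambda_{m+1})$ and convexity give only that $\phi$ is nondecreasing on $[\lambda_{m+1},\infty)$, not on $[\lambda_m,\infty)$, because a minimizer can lie strictly between the two points. That weaker fact is all you need to place a minimizer in $[0,u]$; it is the same fact you use correctly in the golden-section step, and that non-strict comparison argument already handles ties.
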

\begin{proof}
For fixed $k$, \eqref{eq:cancelled-D} is a sum of maxima of affine functions of $\lambda$, plus an affine term.  It is therefore finite, convex, and piecewise linear.  A slope of any active affine piece is
$\sum_i r_{ij_i}(b_k)-\Gamma b_k$.  Because $r_{ij}(b_k)$ lies between $a_{ij}-d_{ij}$ and $a_{ij}$, its absolute value is bounded by $H_I$.  A finite pointwise maximum preserves convexity, piecewise linearity, and this common Lipschitz bound, proving the first claim.

In the baseline form \eqref{eq:dense-D}, $c_{ij}(b_k)\ge0$ and every group contains an option with zero transformed cost.  Thus each $D(\cdot,b_k)$ has eventual slope $C(b_k)\ge0$ for $k\in\mathcal F$.  The finite maximum $\phi_I$ is consequently nondecreasing after its last breakpoint.  Geometric expansion therefore reaches consecutive endpoints $a<b$ with $\phi_I(b)\ge\phi_I(a)$.  Convexity then implies that some minimizer lies in $[0,b]$; otherwise all subgradients through $b$ would be negative, contradicting the nonnegative secant slope.  Standard golden-section comparisons preserve at least one minimizer in the retained bracket.

Finally, if $\lambda^*$ is a minimizer in $[\ell,u]$, Lipschitz continuity gives
$0\le\phi_I(\bar\lambda)-\phi_I(\lambda^*)\le H_I|\bar\lambda-\lambda^*|\le H_I(u-\ell)$.
Substitute $\phi_I(\lambda^*)=U(I)$ and apply \Cref{thm:dominance}; singleton exactness follows from \Cref{prop:validity}.
\end{proof}

After bracketing, golden-section contraction needs $O(\log(H_I(u-\ell)/\eta_I))$ simultaneous evaluations.  The released solver targets $\eta_I\le10^{-9}+10^{-8}\max\{1,|\bar U(I)|\}$; at a binary64 multiplier-resolution floor it returns the valid coarser gap rather than claiming the target, while a single feasible threshold is solved by an exact piecewise-linear fallback.  Ragged group arrays realize the stated $O(B+K)$ working storage.  To preserve the real-arithmetic certificate for binary64 inputs, vectorized range accumulation carries a conservative forward-error enclosure, and threshold feasibility is decided by the exact sign of the binary64-represented coefficients whenever that enclosure straddles zero.  Overlapping value enclosures are likewise compared with exact rational arithmetic.  The reported optimality gap includes both the numerical-enclosure width and multiplier-bracket width.  If $A_c$ capacity signs are ambiguous, exact classification adds $O(A_cK)$ work; if $A_v$ multiplier values on interval $I$ are ambiguous, exact comparison adds $O(A_v|I\cap\mathcal F|K)$.  On inputs satisfying the checked aggregate-objective scale condition, the ordinary finite path retains the theorem's arithmetic-operation bound; inputs outside that implementation domain are rejected with rescaling guidance rather than assigned a finite certificate.

\section{Adaptive certification}\label{sec:adaptive}

Let $E$ be the set of thresholds whose fixed LPs have been solved, initialized so that $E\cap\mathcal F\ne\varnothing$; the feasibility mask supplies such an index whenever $\mathcal F\ne\varnothing$.  Let $\mathcal P$ be a collection of active intervals covering every index in $\mathcal F\setminus E$; its intervals may also contain evaluated indices.  Let $U^{\rm disc}$ be the maximum last valid bound of every removed interval not yet dominated by the incumbent, with $U^{\rm disc}=-\infty$ initially.  Maintain
\[
\mathrm{LB}:=\max_{k\in E\cap\mathcal F}L(b_k),
\qquad
\mathrm{UB}:=\max\left\{\mathrm{LB},U^{\rm disc},\max_{I\in\mathcal P}\bar U(I)\right\},
\]
where a maximum over an empty collection is $-\infty$.  The algorithm begins with the root interval, evaluates its endpoints and midpoint, and repeatedly bisects the active interval with largest bound.  The same oracle-independent endpoint--midpoint choices are evaluated in every new nonsingleton interval for both compared methods.  If the three root candidates contain no feasible threshold, the initialization requirement above calls for one additional index from the already constructed feasibility mask before any gap test.  All timed comparison instances had a feasible endpoint or midpoint, so this fallback was not invoked there.  An interval can be removed once its stored bound is within the requested tolerance of $\mathrm{LB}$; its last valid upper bound is retained in $U^{\rm disc}$ until $\mathrm{LB}$ dominates it.  Singleton thresholds are solved directly.

\begin{proposition}[Certificate invariant]\label{prop:certificate}
At every iteration,
$\mathrm{LB}\le M\le\mathrm{UB}$.  If the procedure stops when
\[
\mathrm{UB}-\mathrm{LB}\le\varepsilon\max\{1,|\mathrm{LB}|\},
\]
then it certifies \eqref{eq:target-M} to scaled absolute--relative tolerance $\varepsilon$.  If all feasible thresholds have been evaluated, then $\mathrm{LB}=M$ and all interval records may be removed, setting $\mathrm{UB}:=\mathrm{LB}$.  Alternatively, if every active and retained discarded bound is already at most $\mathrm{LB}$, then the maintained bounds satisfy $\mathrm{LB}=\mathrm{UB}=M$.
\end{proposition}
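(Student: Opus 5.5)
The plan is to prove the two inequalities $\mathrm{LB}\le M$ and $M\le\mathrm{UB}$ as loop invariants. I will check that they hold after initialization and are preserved by each elementary operation: evaluating a threshold, bisecting an interval, discarding an interval, and releasing a retained discarded bound. The stopping consequences then follow by simple arithmetic.

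\textbf{The lower bound.} The inequality $\mathrm{LB}\le M$ is immediate. $E\cap\mathcal F$ is a nonempty subset of $\mathcal F$, so $\mathrm{LB}$ is a maximum of $L(b_k)$ over a subset of the indices defining $M$ in \eqref{eq:target-M}. It is also finite from initialization onward. Since evaluations only add indices to $E$, $\mathrm{LB}$ is nondecreasing.

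\textbf{The upper bound.} I will fix a maximizer $k^*\in\mathcal F$ of \eqref{eq:target-M}, so $M=L(b_{k^*})$, and show that at every iteration some term in the definition of $\mathrm{UB}$ is at least $M$. There are three cases.
\begin{itemize}
\item If $k^*\in E$, then $\mathrm{LB}\ge M$.
\item Otherwise $k^*$ lies in some active interval $I\in\mathcal P$, by the covering requirement. \Cref{prop:validity} and \Cref{thm:certified-minimax} then give $M\le\max_{k\in I\cap\mathcal F}L(b_k)\le U(I)\le\bar U(I)$.
\item Otherwise $k^*$ belonged to an interval removed earlier. Its stored bound was then a valid upper bound on $L(b_{k^*})$ by the same results.
\end{itemize}
The third case is the main obstacle, because removed intervals leave $\mathcal P$. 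I would handle it by tracking the removal rule exactly. A removed bound is either retained in $U^{\rm disc}$, and then $U^{\rm disc}\ge M$, or it has been dropped because $\mathrm{LB}$ dominates it, and then $\mathrm{LB}\ge$ that bound $\ge M$.

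For this I must check that bisection keeps the covering property. The children of $I$ jointly contain every index of $I$, and singletons are solved directly, so each such index moves into $E$. I must also check that replacing the bound of a parent by those of its children never loses the witness $k^*$: it is simply carried by whichever child contains it. The phrase ``not yet dominated by the incumbent'' needs to be read as ``stored bound $>\mathrm{LB}$''. With that reading, once a record is dropped, $\mathrm{LB}$ already bounds it, and $\mathrm{LB}$ only grows thereafter.

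\textbf{Consequences.} Under the stopping rule, $\mathrm{LB}\le M\le\mathrm{UB}\le\mathrm{LB}+\varepsilon\max\{1,|\mathrm{LB}|\}$, which is the claimed scaled certification. If $E\supseteq\mathcal F$, then $\mathrm{LB}$ is the maximum over all of $\mathcal F$, so $\mathrm{LB}=M$. Setting $\mathrm{UB}:=\mathrm{LB}$ therefore preserves $\mathrm{LB}\le M\le\mathrm{UB}$ trivially, and the interval records are no longer needed. Finally, if every $\bar U(I)$ for $I\in\mathcal P$ and $U^{\rm disc}$ are at most $\mathrm{LB}$, then the definition gives $\mathrm{UB}=\mathrm{LB}$. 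Combined with the invariant, this yields $\mathrm{LB}=M=\mathrm{UB}$.
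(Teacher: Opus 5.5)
Your proof is correct and takes the same route as the paper. $\mathrm{LB}\le M$ holds because $\mathrm{LB}$ maximizes over a subset of $\mathcal F$. $M\le\mathrm{UB}$ holds because every unevaluated feasible threshold lies in an active interval or a removed one whose bound is retained in $U^{\rm disc}$, with \Cref{prop:validity} making each stored bound valid, and the stopping consequences then follow from the definitions. Your witness-based bookkeeping is just a more careful version of the paper's terse argument: it also makes explicit that a retained bound released once $\mathrm{LB}$ dominates it still leaves $\mathrm{LB}\ge M$, a case the paper leaves implicit.
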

\begin{proof}
$\mathrm{LB}$ is the maximum over a subset of fixed-threshold LP values.  Every unevaluated feasible threshold belongs to an active interval or to a removed interval whose retained bound is represented by $U^{\rm disc}$; \Cref{prop:validity} therefore implies $M\le\mathrm{UB}$.  Splitting replaces a parent by children whose union is the parent, and direct singleton evaluation transfers its exact LP value into $\mathrm{LB}$.  These operations preserve the inequalities.  The stopping claim follows by division by $\max\{1,|\mathrm{LB}|\}$.  If every feasible threshold has been evaluated, the definition of $M$ gives $M=\mathrm{LB}$ and obsolete interval records can be deleted.  Under the alternative bound-dominance condition, the displayed definition of $\mathrm{UB}$ gives $\mathrm{UB}=\mathrm{LB}$; in either case the stated equality follows.
\end{proof}

The primary certificate concerns $M$, the complete threshold-disjunctive LP upper bound.  The released implementation also provides an exact integer variant: it replaces the LP lower bound by a globally robust integer incumbent, solves a fixed-threshold MCKP branch-and-bound whenever a singleton must be resolved, and prunes a whole interval whenever its retained LP upper bound cannot improve the incumbent.

\begin{corollary}[Exact integer integration]\label{cor:integer-integration}
The integer interval-search variant maintains
$Z^{\rm incumbent}\le Z^{\rm robust}\le\mathrm{UB}$.
If it exhausts the active intervals or terminates with $\mathrm{UB}-Z^{\rm incumbent}$ within the prescribed scaled tolerance, the incumbent is globally optimal to that tolerance.
\end{corollary}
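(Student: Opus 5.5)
The corollary is an invariant argument modeled on \Cref{prop:certificate}. The LP lower bound $\mathrm{LB}$ is replaced by a robust integer incumbent $Z^{\rm incumbent}$, and the upper side is still assembled from valid interval bounds. I would prove the two inequalities separately, then show that they are preserved by every operation of the search: splitting, singleton resolution, and interval pruning. Termination then follows by the same division argument used in \Cref{prop:certificate}.

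\emph{Lower inequality.} Every incumbent is a binary selection that the implementation has verified as globally robust feasible, either directly or via a fixed-threshold feasibility certificate \eqref{eq:fixed-threshold-feasibility} at some $\theta\in\B$. The classical union representation then makes it feasible for \eqref{eq:robust-mckp}, so $Z^{\rm incumbent}\le Z^{\rm robust}$. Incumbent updates only replace it by a better feasible value, so the inequality persists.

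\emph{Upper inequality.} Write $Z^{\rm robust}=\max_{k\in\mathcal F}Z_k^{\rm IP}$. I would define $\mathrm{UB}$ as the maximum of $Z^{\rm incumbent}$, the retained bounds of pruned or discarded intervals, the bounds $\bar U(I)$ of active intervals, and the exact branch-and-bound values $Z_k^{\rm IP}$ of resolved singletons. The accounting requirement is that each feasible $k$ is always in at least one of three states:
\begin{itemize}
\item it lies in an active interval, so $Z_k^{\rm IP}\le L(b_k)\le \bar U(I)$ by \Cref{prop:validity} and \Cref{thm:certified-minimax};
\item it lies in a pruned interval whose retained bound is at most the incumbent value (within tolerance) at pruning time, and that bound still dominates $Z_k^{\rm IP}$;
\item it has been resolved exactly, so $Z_k^{\rm IP}\le Z^{\rm incumbent}$ once the incumbent absorbs the optimal selection, which is robust feasible.
\end{itemize}
Splitting covers the parent by its children, so coverage is preserved. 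Taking the maximum over $k$ gives $Z^{\rm robust}\le\mathrm{UB}$.

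\emph{Termination.} If the active intervals are exhausted, every feasible threshold has been resolved or pruned. Then $\mathrm{UB}$ reduces to the maximum of the incumbent and the retained pruned bounds, and each of these is at most $Z^{\rm incumbent}$ up to the pruning tolerance. Hence $Z^{\rm robust}-Z^{\rm incumbent}$ is within tolerance. The gap-based stopping rule gives the same conclusion directly from the sandwich $Z^{\rm incumbent}\le Z^{\rm robust}\le\mathrm{UB}$.

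The main obstacle is the pruning step. A pruned interval's bound may exceed the incumbent by up to the tolerance, so it must be retained in $\mathrm{UB}$ in the same way as $U^{\rm disc}$ in \Cref{prop:certificate}, rather than dropped. If it were dropped, exhaustion would certify only within the pruning tolerance rather than exactly. I would therefore state the pruning tolerance and the stopping tolerance as the same scaled quantity. The exhausted case then yields optimality ``to that tolerance'', matching the wording of the statement.
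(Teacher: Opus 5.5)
Your proposal is correct and follows essentially the same route as the paper. The paper bounds each fixed-threshold integer optimum by its LP value, and hence by the interval bound via \Cref{prop:validity}. It then transfers resolved singleton values into the incumbent, notes that splitting preserves coverage and that pruning keeps a valid retained bound, and inherits the invariant and stopping claim from \Cref{prop:certificate}. You make the robust feasibility of the incumbent and the $U^{\rm disc}$-style bookkeeping explicit, which the paper leaves implicit.

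One small correction to your closing remark: retaining pruned bounds does not change the exhaustion guarantee from tolerance-level to exact. What it does is keep $Z^{\rm robust}\le\mathrm{UB}$ valid at every iteration, since dropping them could let the reported $\mathrm{UB}$ fall below $Z^{\rm robust}$.
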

\begin{proof}
Every fixed-threshold integer optimum is bounded by its fixed-threshold LP value, and \Cref{prop:validity} bounds all such LP values in an active interval.  Direct singleton solution transfers an exact integer value into the incumbent; splitting preserves interval coverage; and pruning occurs only when a valid retained upper bound cannot improve the incumbent.  The invariant and stopping claim follow exactly as in \Cref{prop:certificate}.
\end{proof}

\section{Computational study}\label{sec:computations}

\subsection{Design and comparators}

The certified-minimization protocol was serialized before the final rerun; its SHA-256 digest is
\nolinkurl{6db8550e9e9bb47f352bac93423d82067ed09e6ba6cbe4608a7544c2e3a7d261}.
The factorial instance design predates the certified oracle, and the complete final protocol is serialized with the release; it is a reproducible fixed design rather than an external preregistration.  All runs used one thread on an Apple M4 under macOS, Python 3.14.2, NumPy 2.5.1, and SciPy 1.18.0 with HiGHS.  Source data, raw timing repetitions, environments, summaries, and generation scripts accompany the paper.

Four structured families isolate distinct difficulties: \emph{dense frontier} gives many competitive value--resource tradeoffs; \emph{correlated risk} couples value gains and deviations; \emph{near tie} produces unstable rankings; and \emph{many breakpoints} gives essentially unique deviations, so $B$ grows with $K$.  The primary panel uses six options per group, $\Gamma=\lfloor\sqrt n\rfloor$, $n\in\{360,720,1440\}$, five seeds per family--size cell, and five timing repetitions per method.  Method order alternates within an instance; the instance median is the sole statistical observation.  We report geometric-mean paired speedup and a 10,000-draw design-stratified bootstrap interval that resamples seeds within each family--size cell.  These descriptive summaries quantify variation across the fixed generated design; they do not define a superpopulation of robust MCKPs.

The complete-certificate comparator is the bounded-threshold LP of \citet{BusingGersingKoster2023}, specialized with one clique/GUB inequality per exactly-one group as in \eqref{eq:clique-main} and assembled in sparse CSR form.  Both methods use the same best-first splitting policy, fixed-threshold LP routine, endpoint--midpoint candidate rule, scaled tolerance $\varepsilon=10^{-6}$, and time limit including preprocessing.  They differ only in the interval upper bound: the comparator solves a bounded-threshold clique LP, whereas the compressed method evaluates the group-envelope Lagrangian bound.

This is an end-to-end comparison for the stated deliverable---certifying the maximum LP value over the complete threshold family---and a component isolation relative to the broader algorithms of \citet{BusingGersingKoster2023}.  It is not a reconstruction of DnC+: importing that solver would add threshold filtering, estimators, cuts, incumbent operations, early termination, and a different commercial solver stack.  Holding the surrounding search fixed makes the source of any bound, work-count, or timing difference identifiable.

The shared fixed-threshold routine constructs each group's upper value--cost hull and greedily merges its slopes, avoiding branch-and-bound data that neither certificate needs.  Held-out validation compares this LP-only path with an independent reference routine at every threshold; both certificates therefore use the same fixed-MCKP solver.

Protocol-fixed gates required maximum algebraic error at most $2\times10^{-6}$; kernel speedup at least 3$\times$ for $n\ge360$; 100\% primary tolerance completion; geometric-mean primary speedup at least 2$\times$ with interval lower endpoint above 1.5$\times$; at least 80\% timing wins; root dominance in at least 95\% of cases; a median win in all four families; and a median win in every robustness configuration.

\subsection{Correctness validation and oracle ablations}

Forty held-out irregular instances use unequal menu sizes, repeated and near-repeated deviations, signed objectives, and $\Gamma$ values from 0 to $n$.  We compare the compressed trace with (i) the dense baseline-cost trace, (ii) direct evaluation of \eqref{eq:cancelled-D}, (iii) independently optimized interval bounds, (iv) independently solved epigraph LPs for randomly selected intervals, (v) an independent fixed-MCKP LP routine at every threshold, and (vi) a complete threshold scan.  The maximum algebraic discrepancy is \PubValidationMaxError; the maximum violation of the certified upper/lower enclosure against the independent epigraph LP is \PubValidationCertViolation; and the largest scaled minimization certificate is \PubValidationCertGap.  The minimum root-bound slack over the complete scan is \PubValidationMinSlack.  All serialized validation gates pass.

The kernel ablation separates algebraic compression from adaptive search.  It contains \PubKernelCases{} instances, nine normalized multiplier queries, five repetitions, and balanced order.  \Cref{tab:kernel} reports geometric-mean query and total construction-plus-query speedups.  The maximum identity error is \PubKernelMaxError.  For $n\ge360$, total speedup is \PubKernelLargeSpeedup$\times$ in geometric mean.  In the many-breakpoints family, where $B$ grows linearly with $K$, the descriptive log--log slopes are \PubKernelCompressedSlope{} for the compressed evaluator and \PubKernelDenseSlope{} for dense materialization; the left panel of \Cref{fig:scaling} shows the separation.  Storage denotes persistent numerical-array bytes rather than peak process memory and grows sharply only in that family.

To isolate the interval oracles from adaptive-tree effects, a common-trace ablation evaluates both bounds on the same \PubTraceIntervals{} dyadic intervals from \PubTraceCases{} independently generated instances.  The envelope oracle wins \PubTraceWins{} instance timings with a \PubTraceGeoSpeedup$\times$ geometric-mean speedup; every evaluated bound is certified, with maximum scaled minimization gap \PubTraceCertGap, and its bound is no larger than the clique bound on \PubTraceDominancePct{} of matched intervals.  This comparison includes preprocessing and sparse LP assembly.  Because the intervals are identical, it is the direct isolation test of the interval-bound component; \Cref{fig:common-trace} shows that the advantage is present in every family and instance.  The adaptive experiment below then measures the additional effect of tighter bounds on the search trajectory.

\begin{figure}[t]
\centering
\includegraphics[width=0.82\textwidth]{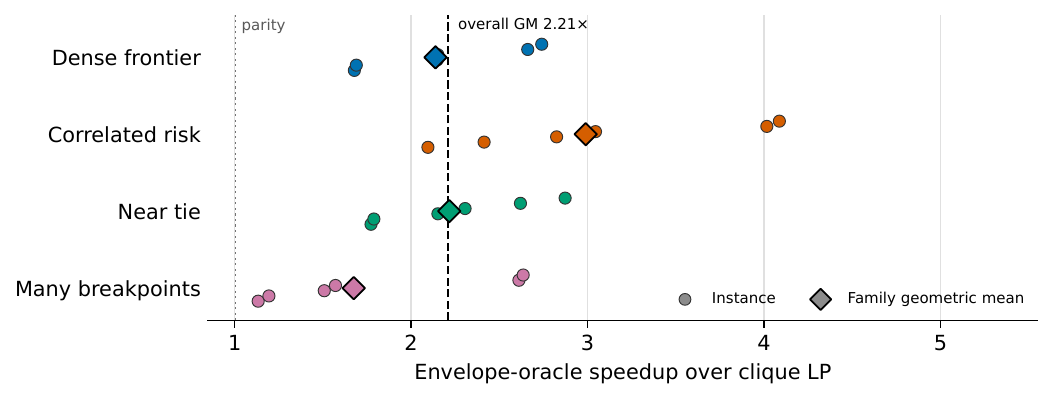}
\caption{Matched-trace comparator isolation.  Each circle is one independently generated instance; vertical offsets only separate observations within a family, and diamonds mark family geometric means.  The dotted line marks parity and the dashed line the overall \PubTraceGeoSpeedup$\times$ geometric mean.  All \PubTraceCases{} observations favor the envelope oracle, and its certified bound is no larger than the clique bound on all \PubTraceIntervals{} matched intervals.}
\label{fig:common-trace}
\end{figure}

\begin{table}[t]
\centering
\caption{Kernel ablation against dense threshold materialization.  Speedups are geometric means over four families and three seeds.  Storage is the median (maximum) ratio of persistent numerical-array bytes and excludes interpreter overhead.}
\label{tab:kernel}
\small
\begin{tabular}{rrrrrr}
\toprule
Groups & Cases & Query speedup & Total speedup & Storage ratio & Maximum error \\
\midrule
90 & 12 & 3.48 & 33.63 & 2.62 (35.7) & 6.7e-09 \\
180 & 12 & 5.82 & 44.68 & 2.64 (71.2) & 2.7e-08 \\
360 & 12 & 8.53 & 56.19 & 2.65 (142.2) & 1.1e-07 \\
720 & 12 & 11.49 & 64.37 & 2.65 (284.3) & 4.3e-07 \\
\bottomrule
\end{tabular}

\end{table}

\subsection{Primary result}

Both methods reach tolerance on all \PubPrimaryCases{} primary instances.  Every envelope bound is certified, with maximum scaled minimization gap \PubPrimaryCertGap.  The envelope method wins all \PubPrimaryWins{} paired timings, with geometric-mean speedup \PubPrimaryGeoSpeedup$\times$ and design-stratified 95\% interval [\PubPrimaryCILow, \PubPrimaryCIHigh].  It evaluates a median \PubPrimaryMedianThetaPct{} of thresholds as fixed LPs.  Final lower bounds are identical at recorded precision, and its root bound never exceeds the clique root bound within numerical tolerance.  At the raw repeat-block level, \PubRepeatWins{}/\PubRepeatBlocks{} are wins (minimum speedup \PubRepeatMinSpeedup$\times$); median within-instance coefficients of variation are \PubCompressedMedianCV{} and \PubCliqueMedianCV{}.  Across an instance's interval solves, the clique comparator uses a median \PubCliqueMedianNnz{} sparse-matrix nonzeros and \PubCliqueMedianCSRMiB{} MiB of cumulative CSR arrays.  Together with the matched-interval result, these observations show that the gain survives complete certification and is not produced by different fixed-threshold solvers or search policies.  The right panel of \Cref{fig:scaling} displays instance- and size-level scaling.

\begin{table}[t]
\centering
\caption{Primary complete-certificate comparison.  Each reported time and evaluation count is the median across instances of that size after taking the within-instance median over five balanced timing repetitions.  Aggregate speedup treats each instance once.  In the fixed-threshold evaluation column, C denotes the compressed certificate and Q the clique certificate.}
\label{tab:primary}
\small
\resizebox{\textwidth}{!}{\begin{tabular}{rrrrrrrr}
\toprule
Groups & Cases & Compressed (s) & Clique (s) & Geometric speedup & Wins & \shortstack{Fixed-threshold LP\\evaluations (C / Q)} & \shortstack{Clique interval LP\\evaluations} \\
\midrule
360 & 20 & 0.083 & 0.140 & 1.69 & 20/20 & 12/12 & 8 \\
720 & 20 & 0.140 & 0.323 & 2.42 & 20/20 & 12/12 & 8 \\
1440 & 20 & 0.255 & 0.762 & 3.24 & 20/20 & 12/12 & 8 \\
\bottomrule
\end{tabular}
}
\end{table}

\begin{figure}[t]
\centering
\includegraphics[width=0.92\textwidth]{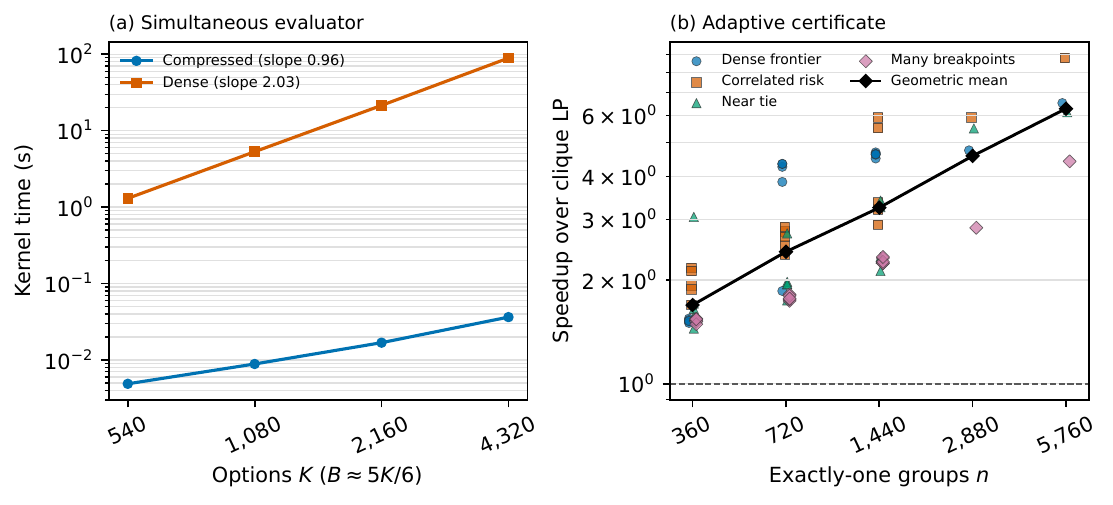}
\caption{Mechanism and end-to-end scaling.  Left: construction plus nine multiplier queries in the many-breakpoints kernel panel, where $B$ grows linearly with $K$; lines show geometric means over three seeds, with descriptive log--log slopes.  Right: instance-level speedup of the compressed adaptive certificate over the clique-LP certificate; black diamonds are geometric means by size.  Primary observations use five seeds at $n\le1440$ and stress observations use one separate seed at $n\in\{2880,5760\}$.}
\label{fig:scaling}
\end{figure}

Every family median exceeds one, from \PubFamilyBreakpoints$\times$ for many breakpoints to \PubFamilyDense$\times$ for dense frontier, so the protocol-fixed breadth gate passes.

\subsection{Robustness, stress, and published-coefficient evidence}

The 36-instance robustness panel changes one dimension at $n=720$: $\Gamma=1$, twelve-option menus, or $\Gamma=\lfloor0.2n\rfloor$.  Every instance reaches tolerance, and every configuration-level bootstrap interval remains above one.  The wide-menu and large-budget panels each record 12/12 timing wins; the sparse-budget panel records 6/12, identifying the regime in which the clique search already eliminates most thresholds and leaves less interval-bound work to compress.

\begin{table}[t]
\centering
\caption{Robustness configurations at 720 groups, four families, and three new seeds.  Within each configuration, bootstrap intervals resample the three seeds separately by family.}
\label{tab:robustness}
\small
\resizebox{\textwidth}{!}{\begin{tabular}{lrrr}
\toprule
Configuration & Geometric speedup [95\% interval] & Wins & Thresholds evaluated \\
\midrule
Sparse budget ($m=6,\Gamma=1$) & 1.29 [1.20, 1.46] & 6/12 & 11.5\% \\
Wide menu ($m=12,\Gamma=\lfloor\sqrt n\rfloor$) & 2.59 [2.28, 2.95] & 12/12 & 28.8\% \\
Large budget ($m=6,\Gamma=0.2n$) & 2.83 [2.66, 3.02] & 12/12 & 34.6\% \\
\bottomrule
\end{tabular}
}
\end{table}

The descriptive stress panel uses four families, one new seed, and two repetitions at each of 2,880 and 5,760 groups.  All eight instances reach tolerance with identical final lower bounds.  The geometric-mean speedup is \PubStressGeoSpeedup$\times$ overall and \PubStressNHighSpeedup$\times$ at 5,760 groups.  The many-breakpoints instances contain 14,401 and 28,801 thresholds, respectively; detailed rows are in the companion.

For an out-of-generator check, we use the published robust-knapsack archive of \citet{GersingData2022}.  Each binary item becomes the two-option group given after \eqref{eq:robust-mckp}.  We retain the archive's nominal profits, weights, budgets, and deviation vectors, but transfer its objective deviations to resource deviations; this is a transparent model-compatible coefficient stress test, not a reproduction of the source uncertain-objective model.  The panel contains three published seeds at each of 1,000, 5,000, and 10,000 items.  Both methods reach tolerance, every envelope bound is certified with scaled gap at most \PubExternalCertGap, and the method wins \PubExternalWins{}/\PubExternalCases{} timings with geometric-mean speedup \PubExternalGeoSpeedup$\times$, rising to \PubExternalNHighSpeedup$\times$ at 10,000 items.

\FloatBarrier

\subsection{Exact integer integration audit}

The integer integration in \Cref{cor:integer-integration} was audited separately from the LP-certificate experiment.  Twelve instances use 30 groups, all four families, three new seeds, two balanced timing repetitions, and a five-second limit.  The envelope search, clique search, and complete threshold enumeration each certify 7 of 12 instances; compact SCIP certifies all 12.  Every jointly certified objective agrees exactly at recorded precision.  On the seven jointly certified envelope--clique cases, the geometric-mean clique/envelope time ratio is 0.61, showing that fixed-threshold integer search, rather than interval-bound computation, dominates at this scale.  Both interval methods and enumeration certify all dense-frontier and near-tie cases and one of three correlated-risk cases; none certifies a many-breakpoints case within five seconds, while compact SCIP certifies all three.

This result is a bottleneck decomposition rather than counterevidence to the LP-certificate result.  The envelope oracle reduces the interval-bound component; it cannot accelerate fixed-threshold branch-and-bound once that component controls total time.  The audit therefore validates the exact integration and locates its operating boundary without supporting universal integer-solver superiority.  The demonstrated computational claim remains acceleration of the complete LP-bound certificate, particularly in the root and outer-search regime where bounded-threshold relaxations are material.

\subsection{Application-derived semi-synthetic panel}

The pricing model in \Cref{sec:pricing-specialization} also supplies an application-derived realism check.  We use the UCI Online Retail data set, which contains 541,909 transaction records \citep{Chen2015UCI}.  The released calibration reads the first 200,000 records; after positive-price/quantity filtering and the eight-observation SKU rule, 192,451 observations contribute to 2,549 SKU aggregates.  These aggregates calibrate the segment mix, price levels, volumes, and uncertainty scales of nine semi-synthetic pricing portfolios; documented elasticity and menu-generation assumptions remain modeled rather than observed.  For each portfolio, the reported certificate is an upper bound on the best robust-feasible nominal revenue across the complete price-risk threshold family.  The panel uses 360, 720, and 1,440 groups, twelve price choices, three new seeds per size, and three balanced repetitions.  Both methods reach tolerance, but the compressed method wins \PubApplicationWins{}/\PubApplicationCases{} instances and the clique/compressed time ratio is \PubApplicationGeoSpeedup$\times$ (95\% cell-stratified bootstrap interval [\PubApplicationCILow, \PubApplicationCIHigh]); even at 1,440 groups it is \PubApplicationNHighSpeedup$\times$.  The search evaluates a median 0.5\% of thresholds as fixed LPs, so little bounded-threshold work remains to amortize certified minimization.  This panel is therefore a useful negative boundary as well as a coefficient-scale check.  Demand estimation is outside the algorithmic scope, and the nonrandom row cap and modeled demand curves preclude causal or transaction-level validation claims.

\subsection{Scope and limitations}

The experiments validate the envelope identity, certified minimax approximation, retained LP certificate, and speed advantage over the tested sparse clique-LP implementation in the primary, stress, and published-coefficient designs, not universal dominance.  The sparse-budget results are mixed, and the UCI-calibrated panel reverses the timing advantage when the search evaluates only a very small threshold fraction.  The published-data panel transfers coefficients rather than reproducing the source uncertain-objective problem, and the pricing panel remains semi-synthetic.  The implementation uses Python/SciPy on one architecture, while compiled code could shift constants; streaming could reduce dense-oracle storage without changing its $\Theta(BK)$ arithmetic.  As established in the introduction and comparator design, the official implementation of \citet{BusingGersingKoster2023} targets objective uncertainty and combines several algorithmic components with Java/Gurobi.  The controlled comparison therefore isolates its closest bounded-threshold clique relaxation rather than claiming a direct reproduction of DnC+.

The exact audit shows that fixed-threshold integer search can erase the interval-oracle advantage and that compact SCIP is stronger on the tested many-breakpoints instances.  The UCI panel similarly shows that aggressive threshold elimination can leave too little LP-family work to amortize the certified oracle.  The study directly supports complete LP-family certification and interval bounding when that work is material in the tested adaptive search.  Repeated node-level bounds, pricing-loop re-solves, and sensitivity sweeps are plausible integration settings but are not evaluated here.  DnC+, specialized fixed-MCKP solvers, and the oracle are therefore complementary.

\section{Conclusion}\label{sec:conclusion}

For exactly-one robust models, the proposed oracle makes the complete threshold-disjunctive LP certificate a more practical root or outer-search bound.  After sorting, group envelopes evaluate one multiplier over all thresholds in $O(B+K\log(B+1))$ time and $O(B+K)$ working storage.  Convex bracketing converts those evaluations into an explicit minimization certificate: the deployed bound is valid and reports its distance certificate to the exact minimax value, reaches the prescribed tolerance unless multiplier resolution intervenes, is singleton-exact through an exact fallback, and is no larger than the clique bound plus its reported gap.

The evidence is both component-matched and end-to-end for this deliverable.  On identical interval traces the proposed oracle is no weaker throughout and faster on every instance; under the shared adaptive search, the study records zero certificate violations, \PubPrimaryWins{} of \PubPrimaryCases{} primary wins, a \PubPrimaryGeoSpeedup$\times$ geometric-mean speedup, and \PubExternalWins{} of \PubExternalCases{} wins on published knapsack coefficients.  The exact integration locates the boundary: integer subproblem work can dominate, and compact SCIP remains stronger on many-breakpoints instances.  Integration into full robust-solver stacks is therefore the natural computational extension, while multiple robust constraints remain the clearest theoretical extension.

\section*{Data and code availability}
\ifblind
The anonymous supplement contains source code, tests, the protocol and digest, instance generators, raw timing repetitions, result tables, environment records, and the script that regenerates every numerical artifact.
\else
Code and reproducibility artifacts are available at
\url{https://github.com/eric939/simultaneous-group-envelope-mckp}.  The evidence directory contains the serialized protocol, per-phase environments, raw CSV files, JSON summaries, and the SHA-256 artifact manifest.
\fi

\section*{Acknowledgments}
OpenAI ChatGPT and Codex assisted with drafting, editing, code generation, and computational and mathematical auditing.  The author reviewed and verified the work and assumes full responsibility.

\fi

\ifincludeappendix
\FloatBarrier
\appendix
\numberwithin{table}{section}
\numberwithin{figure}{section}

\section{Comparator and exact-integration implementation}\label{app:implementation}

The main manuscript states the bounded-threshold group-clique comparator and proves its formal relationship to the minimax bound.  Both of its constraint matrices are passed to HiGHS in sparse CSR form.  An ambiguous generic HiGHS status is never accepted as an upper bound: the code retries dual simplex and interior point and accepts only optimizer-certified optimality or infeasibility.

The exact integration uses the same best-first interval queue for the envelope and clique variants.  A globally robust HullRound solution initializes the incumbent.  The root endpoints and midpoint are solved by the fixed-threshold branch-and-bound; afterward the algorithm bisects the active interval with largest retained bound.  A singleton is solved exactly only when its interval remains capable of improving the incumbent.  The complete-enumeration comparator uses the same fixed-threshold branch-and-bound ordered by fixed-threshold LP value, while compact SCIP uses the standard budgeted-uncertainty reformulation.  The audit contains four families, 30 groups, three seeds, two balanced repetitions, and a five-second limit.  Raw results, solver statuses, node counts, interval counts, objective agreement, and environment metadata are included in the evidence directory.

\section{Detailed protocol and statistical estimands}\label{app:protocol}

The statistical unit is an instance.  Timing repetitions estimate the runtime of that instance and are not treated as independent observations.  For primary instance $r\in\{1,\ldots,R\}$, let $t_r^C$ and $t_r^Q$ be the medians of five wall-clock repetitions for the compressed and clique methods.  The paired speedup is $s_r=t_r^Q/t_r^C$, and the reported aggregate is
\[
\exp\left(\frac1R\sum_{r=1}^R\log s_r\right).
\]
The percentile interval resamples seeds with replacement separately inside each family--size cell for 10,000 draws, preserving the factorial design.  It is a design-stratified descriptive measure of variation across generated seeds, not a population-sampling interval for all robust MCKPs.  No timing repetition enters this calculation.

The primary design has 60 instances: three sizes, four families, and five seeds.  Robustness has 36 new instances: three configurations, four families, and three seeds at $n=720$.  Stress has eight new instances: two sizes, four families, and one seed.  The kernel has 48 instances: four sizes, four families, and three seeds.  Validation has 40 independently generated irregular cases.  Two separately scoped nine-instance panels use UCI-calibrated semi-synthetic portfolios and published robust-knapsack coefficients, respectively.  Thread environment variables were fixed to one for every phase.

\section{Exact instance generators}\label{app:generators}

For auditability, we state the structured generator.  Each group has a base option and $m-1$ upgrades.  Let $q_k$, $k=1,\ldots,m-1$, be equally spaced from 0.8 to 14, let $w_i\sim U[8,8.4]$, and let
$G$ be the 25-point grid from 0.2 to 3.2.  The base option has
$(v_{i0},a_{i0},d_{i0})=(w_i,b_f,0)$, where
$b_f$ is 4.6, 4.1, 3.9, and 4.4 for dense frontier, correlated risk, near tie, and many breakpoints, respectively.  Upgrade margins are
\[
a_{ik}=b_f-q_k+\eta_{ik},\qquad \eta_{ik}\sim N(0,0.08^2).
\]
Independent value noises give
\[
v_{ik}=\max\{0,w_i+h_f(q_k)+\epsilon_{ik}\},
\]
with $(h_f(q),\operatorname{sd}\epsilon)$ equal to
$(8\sqrt q,0.35)$, $(3.15q,0.75)$, $(2.25q,1.65)$, and
$(7.5\sqrt q,0.45)$ in the same family order.  With zero-based group index $i$, deviations are
\begin{align*}
d_{ik}^{\rm dense}&=G[(7i+3k+s)\bmod25],\\
d_{ik}^{\rm corr}&=\operatorname*{nearest}_{g\in G}
\{0.30+0.19q_k+\zeta_{ik}\},\quad \zeta_{ik}\sim N(0,0.08^2),\\
d_{ik}^{\rm tie}&=G[(5i+k+s)\bmod25],\\
d_{ik}^{\rm break}&=0.15+0.003[i(m-1)+k]+10^{-6}s,
\end{align*}
where $s$ is the reported seed.  To reproduce the release exactly, the NumPy seed is the first 16 hexadecimal digits of
\texttt{SHA256(`v4|family|n|m|Gamma|seed')} reduced modulo $2^{32}$.  This seed derivation is platform-independent; the released environment record and result files remain authoritative because numerical distribution routines can vary across library versions.

The irregular validation generator independently draws 8--20 groups and 2--10 options per group, signed Gaussian values, Gaussian margins, repeated deviations from $\{0,0.25,0.5,1,2,4,8\}$ with occasional perturbations below 0.05, and $\Gamma$ uniformly from $\{0,\ldots,n\}$.  These cases target algebraic edge conditions rather than timing difficulty.

For the application-derived panel, the calibration reads the first 200,000 UCI records and aggregates positive-quantity, positive-price observations by stock code.  Products with fewer than eight retained observations are removed, leaving 192,451 contributing observations and 2,549 SKU aggregates.  Price--volume quantiles determine five segment shares and reference medians.  Within each generated portfolio, segment-calibrated lognormal prices and volumes combine with documented elasticity ranges; twelve psychologically rounded price choices induce objective, nominal-resource, and uncertainty coefficients.  The released segment aggregate has SHA-256 digest \nolinkurl{1ba04343eb853439f184a55dbd6ac3dbeac6f6438ef1b9dcc8b0173e3a40b7f6}; the raw data are retrieved from the cited UCI record.

The published-coefficient panel reads the CC-BY robust-knapsack archive of \citet{GersingData2022}, verified by SHA-256 digest \nolinkurl{8571b3e545607415a38a39dc506b21bd891b6a22ce252e42a1622a5a5f451818}.  For each binary item, the parser creates an unselected and selected option using the exact two-option reduction stated in the main paper.  The source nominal profit, weight, $\Gamma$, and deviation coefficient are otherwise unchanged.  Because the source deviations perturb objective coefficients whereas the model studied here perturbs a resource row, this panel is labeled coefficient transfer throughout; its purpose is generator independence, not source-model replication.

\begin{center}
\centering
\captionof{table}{Published robust-knapsack coefficient-transfer panel.  Times are instance medians over two balanced repetitions.  Source coefficients and budgets are retained, while deviations are transferred from the source uncertain objective to the resource constraint.}
\label{tab:external}
\small
\begin{tabular}{rrrrrr}
\toprule
Binary items & Cases & Envelope (s) & Clique (s) & Geometric speedup & Wins \\
\midrule
1,000 & 3 & 0.319 & 0.529 & 1.69 & 3/3 \\
5,000 & 3 & 1.292 & 4.165 & 3.07 & 3/3 \\
10,000 & 3 & 2.636 & 10.986 & 4.00 & 3/3 \\
\bottomrule
\end{tabular}

\end{center}

\section{Reproducibility checklist}\label{app:reproducibility}

The supplement contains the protocol and digest, phase-specific environments, instance-level and raw timing CSVs, summaries with gates, exact-epigraph certificate checks, unit tests for every algebraic and certificate invariant, and a generator for all numerical manuscript artifacts and their hash manifest.  Kernel storage means persistent numerical-array bytes rather than process RSS; comparator matrix storage is measured from sparse CSR arrays; the two-repetition stress and published-coefficient panels are descriptive.

\fi


\begin{thebibliography}{99}
\footnotesize
\setlength{\bibsep}{0pt}

\bibitem[\'{A}lvarez-Miranda et~al.(2013)]{AlvarezMiranda2013}
\'{A}lvarez-Miranda, E., Ljubi\'{c}, I., and Toth, P. (2013).
\newblock A note on the Bertsimas \& Sim algorithm for robust combinatorial optimization problems.
\newblock \emph{4OR}, 11, 349--360.
\newblock \href{https://doi.org/10.1007/s10288-013-0231-6}{doi:10.1007/s10288-013-0231-6}.

\bibitem[Atamt\"urk(2006)]{Atamturk2006}
Atamt\"urk, A. (2006).
\newblock Strong formulations of robust mixed 0--1 programming.
\newblock \emph{Mathematical Programming}, 108, 235--250.
\newblock \href{https://doi.org/10.1007/s10107-006-0709-5}{doi:10.1007/s10107-006-0709-5}.

\bibitem[Bertsimas and Sim(2003)]{BertsimasSim2003}
Bertsimas, D. and Sim, M. (2003).
\newblock Robust discrete optimization and network flows.
\newblock \emph{Mathematical Programming}, 98, 49--71.
\newblock \href{https://doi.org/10.1007/s10107-003-0396-4}{doi:10.1007/s10107-003-0396-4}.

\bibitem[Bertsimas and Sim(2004)]{BertsimasSim2004}
Bertsimas, D. and Sim, M. (2004).
\newblock The price of robustness.
\newblock \emph{Operations Research}, 52, 35--53.
\newblock \href{https://doi.org/10.1287/opre.1030.0065}{doi:10.1287/opre.1030.0065}.

\bibitem[B\"using et~al.(2023)]{BusingGersingKoster2023}
B\"using, C., Gersing, T., and Koster, A.~M.~C.~A. (2023).
\newblock A branch and bound algorithm for robust binary optimization with budget uncertainty.
\newblock \emph{Mathematical Programming Computation}, 15, 269--326.
\newblock \href{https://doi.org/10.1007/s12532-022-00232-2}{doi:10.1007/s12532-022-00232-2}.

\bibitem[Caserta and Vo{\ss}(2019)]{Caserta2019}
Caserta, M. and Vo{\ss}, S. (2019).
\newblock The robust multiple-choice multidimensional knapsack problem.
\newblock \emph{Omega}, 86, 16--27.
\newblock \href{https://doi.org/10.1016/j.omega.2018.06.014}{doi:10.1016/j.omega.2018.06.014}.

\bibitem[Chen(2015)]{Chen2015UCI}
Chen, D. (2015).
\newblock Online Retail [Dataset].
\newblock \emph{UCI Machine Learning Repository}.
\newblock \href{https://doi.org/10.24432/C5BW33}{doi:10.24432/C5BW33}.

\bibitem[Dyer(1984)]{Dyer1984}
Dyer, M.~E. (1984).
\newblock An $O(n)$ algorithm for the multiple-choice knapsack linear program.
\newblock \emph{Mathematical Programming}, 29, 57--63.
\newblock \href{https://doi.org/10.1007/BF02591729}{doi:10.1007/BF02591729}.

\bibitem[Fischetti and Monaci(2012)]{FischettiMonaci2012}
Fischetti, M. and Monaci, M. (2012).
\newblock Cutting plane versus compact formulations for uncertain (integer) linear programs.
\newblock \emph{Mathematical Programming Computation}, 4, 239--273.
\newblock \href{https://doi.org/10.1007/s12532-012-0039-y}{doi:10.1007/s12532-012-0039-y}.

\bibitem[Gersing et~al.(2022)]{GersingData2022}
Gersing, T., B\"using, C., and Koster, A.~M.~C.~A. (2022).
\newblock Benchmark instances for robust combinatorial optimization with budgeted uncertainty.
\newblock \emph{Zenodo}.
\newblock \href{https://doi.org/10.5281/zenodo.7419028}{doi:10.5281/zenodo.7419028}.

\bibitem[Hansknecht et~al.(2018)]{Hansknecht2018}
Hansknecht, C., Richter, A., and Stiller, S. (2018).
\newblock Fast robust shortest path computations.
\newblock \emph{OASIcs ATMOS}, 65, 5:1--5:21.
\newblock \href{https://doi.org/10.4230/OASIcs.ATMOS.2018.5}{doi:10.4230/OASIcs.ATMOS.2018.5}.

\bibitem[Joung and Park(2021)]{JoungPark2021}
Joung, S. and Park, K. (2021).
\newblock Robust mixed 0--1 programming and submodularity.
\newblock \emph{INFORMS Journal on Optimization}, 3, 183--199.
\newblock \href{https://doi.org/10.1287/ijoo.2019.0042}{doi:10.1287/ijoo.2019.0042}.

\bibitem[Joung et~al.(2023)]{JoungOhLee2023}
Joung, S., Oh, S., and Lee, K. (2023).
\newblock Comparative analysis of linear programming relaxations for the robust knapsack problem.
\newblock \emph{Annals of Operations Research}, 323, 65--78.
\newblock \href{https://doi.org/10.1007/s10479-022-05161-w}{doi:10.1007/s10479-022-05161-w}.

\bibitem[Lee and Kwon(2014)]{LeeKwon2014}
Lee, T. and Kwon, C. (2014).
\newblock A short note on robust combinatorial optimization problems with cardinality constrained uncertainty.
\newblock \emph{4OR}, 12, 373--378.
\newblock \href{https://doi.org/10.1007/s10288-014-0270-7}{doi:10.1007/s10288-014-0270-7}.

\bibitem[Monaci et~al.(2013)]{Monaci2013}
Monaci, M., Pferschy, U., and Serafini, P. (2013).
\newblock Exact solution of the robust knapsack problem.
\newblock \emph{Computers \& Operations Research}, 40, 2625--2631.
\newblock \href{https://doi.org/10.1016/j.cor.2013.05.005}{doi:10.1016/j.cor.2013.05.005}.

\ifblind
\bibitem[Anonymous(2026)]{Shao2026PaperA}
Anonymous (2026).
\newblock Companion paper on certifying finite-menu robust pricing; identifying details withheld for double-blind review.
\else
\bibitem[Shao(2026)]{Shao2026PaperA}
Shao, Z. Y. E. (2026).
\newblock A certifying MCKP framework for $\Gamma$-robust discrete pricing.
\newblock \emph{arXiv preprint arXiv:2603.18653}, version 2.
\newblock \href{https://arxiv.org/abs/2603.18653}{arXiv:2603.18653}.
\fi

\bibitem[Sinha and Zoltners(1979)]{SinhaZoltners1979}
Sinha, P. and Zoltners, A.~A. (1979).
\newblock The multiple-choice knapsack problem.
\newblock \emph{Operations Research}, 27, 503--515.
\newblock \href{https://doi.org/10.1287/opre.27.3.503}{doi:10.1287/opre.27.3.503}.

\bibitem[Szkaliczki(2025)]{Szkaliczki2025}
Szkaliczki, T. (2025).
\newblock Solution methods for the multiple-choice knapsack problem and their applications.
\newblock \emph{Mathematics}, 13, 1097.
\newblock \href{https://doi.org/10.3390/math13071097}{doi:10.3390/math13071097}.

\bibitem[Zemel(1980)]{Zemel1980}
Zemel, E. (1980).
\newblock The linear multiple choice knapsack problem.
\newblock \emph{Operations Research}, 28, 1412--1423.
\newblock \href{https://doi.org/10.1287/opre.28.6.1412}{doi:10.1287/opre.28.6.1412}.

\end{thebibliography}
\end{document}